\documentclass[12pt,a4]{amsart}
\usepackage{latexsym}
\usepackage{amsmath}
\usepackage{amssymb}
\usepackage{amsthm}
\usepackage{amscd}
\usepackage{color}
\definecolor{dblue}{rgb}{0,0,0.45}
\definecolor{red}{rgb}{0.7,0,0}

\usepackage{color}

\setlength{\topmargin}{0in}
\setlength{\oddsidemargin}{0.35in}
\setlength{\evensidemargin}{0.35in}
\setlength{\textwidth}{5.7in}
\setlength{\textheight}{8.7in}
\setlength{\parskip}{3mm}
\numberwithin{equation}{section}

\newtheorem{theorem}{Theorem}[section]
\newtheorem{lemma}[theorem]{Lemma}

\newtheorem{proposition}[theorem]{Proposition}

\theoremstyle{definition}

\newtheorem{remark}[theorem]{Remark}
\newtheorem{definition}[theorem]{Definition}

\theoremstyle{remark}

\begin{document}

\title
[Hardy-type and Heisenberg's Inequality in Morrey Spaces]
{Hardy-type and Heisenberg's Inequality in Morrey Spaces}

\author[H.~Gunawan]{Hendra Gunawan}
\address{Department of Mathematics, Bandung Institute of Technology,
Bandung 40132, Indonesia}
\email{hgunawan@math.itb.ac.id}
\author[D.I.~Hakim]{Denny Ivanal Hakim}
\address{Department of Mathematics and Information Sciences, Tokyo
Metropolitan University, 1-1 Minami Ohsawa, Hachioji, Tokyo 192-0397,
Japan}
\email{dennyivanalhakim@gmail.com}
\author[E.~Nakai]{Eiichi Nakai}
\address{Department of Mathematics, Ibaraki University, Mito, Ibaraki
310-8512}
\email{eiichi.nakai.math@vc.ibaraki.ac.jp}
\author[Y.~Sawano]{Yoshihiro~Sawano}
\address{Department of Mathematics and Information Sciences, Tokyo
Metropolitan University, 1-1 Minami Ohsawa, Hachioji, Tokyo 192-0397,
Japan; and RDUN, Peoples' Friendship University of Russia,
117198, Moscow Miklukho-Maklaya str. 6, Moscow, Russia.}
\email{ysawano@tmu.ac.jp}

\thanks{The first author was supported by ITB Research \&
Innovation Program 2017.
The second and third authors were supported by Grant-in-Aid
for Scientific Research (B) No.~15H03621, Japan Society for
the Promotion of Science. The fourth author is supported by
Grant-in-Aid for Scientific Research (C) No.~16K05209,
Japan Society for the Promotion of Science, and
People's Friendship University of Russia.}

\begin{abstract}
We use the Morrey norm estimate for the imaginary power of the Laplacian
to prove an interpolation inequality for the fractional power of the
Laplacian on Morrey spaces. We then prove a Hardy-type inequality and use
it together with the interpolation inequality to obtain a Heisenberg-type
inequality in Morrey spaces.

\medskip

\noindent
{\bf MSC (2010): 42B20, 42B35}
\end{abstract}

\keywords{Imaginary power of Laplace operators, fractional power of
Laplace operators, interpolation inequality, Hardy's inequality,
Heisenberg's inequality, Morrey spaces.}

\maketitle

\section{Introduction}

Inspired by the work of Ciatti, Cowling, and Ricci \cite{Ciatti15},
we are interested in obtaining an estimate for the Morrey norm of
the fractional power of the Laplacian, in order to prove Heisenberg's
uncertainty inequality in Morrey spaces. To begin with, let
$(-\Delta)^{z/2}$ be the complex power of the Laplacian, given by
\begin{align}\label{eq:17917-2}
[(-\Delta)^{z/2}f]\,\widehat{\empty}\,(\xi)
:=
|\xi|^z \widehat{f}(\xi),
\quad \xi\in \mathbb{R}^n,
\end{align}
for suitable functions $f$ on $\mathbb{R}^n$, where the Fourier
transform is defined by
\[
\widehat{f}(\xi):=\int_{\mathbb{R}^n} f(x)e^{-ix\cdot\xi} dx,
\quad \xi\in \mathbb{R}^n.
\]
Our first aim here is to show the following Morrey norm estimate
for the imaginary power of the Laplacian:
\begin{equation}\label{est-1}
\|(-\Delta)^{i u/2}f\|_{{\mathcal M}^p_q}
\lesssim
(1+|u|)^{\frac{n}{2}}\|f\|_{{\mathcal M}^p_q},
\quad f \in {\mathcal M}^p_q(\mathbb{R}^n),
\end{equation}
for every $u\in\mathbb{R}$, provided that $1<p\le q<\infty$.

Recall that, for $1\le p \le q<\infty$, the Morrey space
${\mathcal M}^p_q(\mathbb{R}^n)$ is the set of all
$f \in L^p_{\rm loc}({\mathbb R}^n)$ for which
\[
\|f\|_{{\mathcal M}^p_q}
:=
\sup_{a \in {\mathbb R}^n, r>0}
|B(a,r)|^{\frac1q-\frac1p}
\left(\int_{B(a,r)}|f(y)|^p\,{\rm d}y\right)^{\frac1p}
\]
is finite. We refer the reader to \cite{YSY10} for
various function spaces built on Morrey spaces.

Based on \cite{RT14},
let us explain why $(-\Delta)^{iu/2}$ should be bounded on
${\mathcal M}^p_q(\mathbb{R}^n)$, for $1< p\le q<\infty$,
with bound $C(u)\lesssim (1+|u|)^{n/2}$.
We define $\widetilde{\mathcal M}^p_q(\mathbb{R}^n)$
to be the closure of $C^\infty_{\text c}(\mathbb{R}^n)$ in
${\mathcal M}^p_q(\mathbb{R}^n)$,
or equivalently,
$\widetilde{\mathcal M}^p_q(\mathbb{R}^n)$
is the closure of $L^q(\mathbb{R}^n)$ in
${\mathcal M}^p_q(\mathbb{R}^n)$ (see \cite[p.~1846]{YSY15}).
We know that $(-\Delta)^{iu/2}$ maps $L^q(\mathbb{R}^n)$
boundedly into $L^q(\mathbb{R}^n)$ \cite{Gun02}.
We also establish in Lemma \ref{lem:17918-1} that,
for $f \in C^\infty_{\rm c}(\mathbb{R}^n)$,
$\|(-\Delta)^{i u/2}f\|_{{\mathcal M}^p_q}
\lesssim C(u)\|f\|_{{\mathcal M}^p_q}$,
keeping in mind that $C_{\text c}^\infty(\mathbb{R}^n)
\subset L^q(\mathbb{R}^n) \subset
{\mathcal M}^p_q(\mathbb{R}^n)$
and that $(-\Delta)^{i u/2}f$ makes sense for
$f \in C^\infty_{\rm c}(\mathbb{R}^n)$ by \eqref{eq:17917-2}.
This means that
$(-\Delta)^{i u/2}:\widetilde{\mathcal M}^p_q(\mathbb{R}^n) \to
\widetilde{\mathcal M}^p_q(\mathbb{R}^n)$ is bounded
(see Definition \ref{def:17917-0} and Lemma \ref{lem:170921-0}).
Next, we know that the  space ${\mathcal H}^{p'}_{q'}(\mathbb{R}^n)$
is the dual of $\widetilde{\mathcal M}^p_q(\mathbb{R}^n)$
(see \cite[Theorem 4.3]{ST15}) if $\frac1p+\frac{1}{p'}=
\frac1q+\frac{1}{q'}=1$.
Here, we recall that $\mathcal{H}^{p'}_{q'}(\mathbb{R}^n)$ is defined
to be the set of all functions $f\in L^{q'}(\mathbb{R}^n)$ for which
\begin{align}\label{eq:171130-1}
f=\sum_{j=1}^\infty \lambda_j A_j,
\end{align}
where
$\{\lambda_j\}_{j=1}^\infty \in \ell^1$ and
$\{A_j\}_{j=1}^\infty$ is a sequence of functions supported on balls
with $\|A_j\|_{L^{q'}}\le 1$ for every $j\in \mathbb{N}$.
The norm of $f\in {\mathcal H}^{p'}_{q'}$ is defined by
\[
\| f \|_{{\mathcal H}^{p'}_{q'}}
:= \inf \left\{ \sum_{j=1}^\infty |\lambda_j|:  \{\lambda_j\}_{j=1}^\infty
\  {\rm and} \  \{A_j\}_{j=1}^\infty \  {\rm satisfying\  \eqref{eq:171130-1} } \right\}.
\]
Meanwhile, the dual of ${\mathcal H}^{p'}_{q'}(\mathbb{R}^n)$
is ${\mathcal M}^p_q(\mathbb{R}^n)$ \cite{Zo86}.
In general, the dual mapping of a bounded linear mapping
$T$ from a Banach space $X$ to $Y$ is bounded from $Y^*$ to $X^*$.
So, since $(-\Delta)^{i u/2}$ is formally self-adjoint,
we see that the boundedness
$(-\Delta)^{i u/2}:\widetilde{\mathcal M}^p_q(\mathbb{R}^n) \to
\widetilde{\mathcal M}^p_q(\mathbb{R}^n)$
established above entails
$(-\Delta)^{i u/2}:{\mathcal H}^{p'}_{q'}(\mathbb{R}^n) \to
{\mathcal H}^{p'}_{q'}(\mathbb{R}^n)$
(see Definition \ref{def:17917-1} and Lemma \ref{lem:17921-1}),
which in turn entails the boundedness of
$(-\Delta)^{i u/2}:{\mathcal M}^p_q(\mathbb{R}^n) \to
{\mathcal M}^p_q(\mathbb{R}^n)$
(see Definition \ref{def:17921-2} and Proposition \ref{prop:170917}).

We note that $|\cdot|^{i u}\widehat{f}$ does not make sense
for some $f \in {\mathcal M}^p_q(\mathbb{R}^n)$. As indicated
above, the operator $(-\Delta)^{iu/2}$ which is initially defined
on $C_{\text c}^\infty(\mathbb{R}^n)$ is then defined on
$\mathcal{M}^p_q(\mathbb{R}^n)$ by the duality relation
\[
\langle (-\Delta)^{i u/2}f,g \rangle
=
\langle f,(-\Delta)^{-i u/2}g \rangle,
\quad g \in {\mathcal H}^{p'}_{q'}(\mathbb{R}^n),
\]
where ${\mathcal H}^{p'}_{q'}(\mathbb{R}^n)$ is the space
whose dual is ${\mathcal M}^p_q(\mathbb{R}^n)$
(see \cite[Proposition 5]{Zo86} and Definition 2.4).
We claim that this definition of $(-\Delta)^{i u/2}f$ coincides
with the one by the Fourier transform, whenever the Fourier
transform of $f$ makes sense. Indeed, we can show that
\[
\overline{\psi(\xi)}{\mathcal F}[(-\Delta)^{i u/2}f](\xi)=
\overline{\psi(\xi)}|\xi|^{i u}{\mathcal F}f(\xi),
\]
for every $\psi \in C^\infty_{\text c}({\mathbb R}^n)$ and
$0\notin {\rm supp}~\psi$,
where ${\mathcal F}$ denotes the Fourier transform.
Observe that if $g \in {\mathcal H}^{p'}_{q'}(\mathbb{R}^n)$,
then ${\mathcal F}^{-1}[\psi{\mathcal F}g] \in
{\mathcal H}^{p'}_{q'}(\mathbb{R}^n)$.
In fact
$$
{\mathcal F}^{-1}[\psi{\mathcal F}g](x)
=(2\pi)^n
{\mathcal F}^{-1}\psi*g(x)
=(2\pi)^n
\int_{{\mathbb R}^n}{\mathcal F}^{-1}\psi(y)g(x-y)\,dy.
$$
As a result,
\begin{align*}
\|{\mathcal F}^{-1}[\psi{\mathcal F}g]\|_{{\mathcal H}^{p'}_{q'}}
&\le
(2\pi)^n
\int_{{\mathbb R}^n}|{\mathcal F}^{-1}\psi(y)|\|g(\cdot-y)\|_{{\mathcal H}^{p'}_{q'}}\,dy\\
&\le
(2\pi)^n
\int_{{\mathbb R}^n}|{\mathcal F}^{-1}\psi(y)|\|g\|_{{\mathcal H}^{p'}_{q'}}\,dy\\
&=C\|g\|_{{\mathcal H}^{p'}_{q'}}<\infty.
\end{align*}
So,
${\mathcal F}^{-1}[\psi{\mathcal F}g]\in {\mathcal H}^{p'}_{q'}(\mathbb{R}^n)$.
It follows that
\[
\langle (-\Delta)^{i u/2}f,{\mathcal F}^{-1}[\psi{\mathcal F}g] \rangle
=
\langle f,(-\Delta)^{-i u/2}{\mathcal F}^{-1}[\psi{\mathcal F}g] \rangle,
\]
or equivalently
\[
\langle {\mathcal F}^{-1}[\overline{\psi}{\mathcal F}[(-\Delta)^{i u/2}f]],g \rangle
=
\langle f,(-\Delta)^{-i u/2}{\mathcal F}^{-1}[\psi{\mathcal F}g] \rangle.
\]
Since $g \in L^{q'}(\mathbb{R}^n)$, we have
\[
(-\Delta)^{-i u/2}{\mathcal F}^{-1}[\psi{\mathcal F}g]=
{\mathcal F}^{-1}[|\cdot|^{-i u}\psi{\mathcal F}g],
\]
and hence
\[
\langle f,(-\Delta)^{-i u/2}{\mathcal F}^{-1}[\psi{\mathcal F}g] \rangle
=
\langle f,{\mathcal F}^{-1}[|\cdot|^{-i u}\psi{\mathcal F}g]\rangle
=
\langle \mathcal{F}^{-1}[\overline{\psi}|\cdot|^{iu}\mathcal{F}f],g\rangle.
\]
We therefore have
\[
\langle {\mathcal F}^{-1}[\overline{\psi}{\mathcal F}[(-\Delta)^{i u/2}f]],g \rangle
=
\langle {\mathcal F}^{-1}[\overline{\psi}|\cdot|^{i u}{\mathcal F}f],g \rangle.
\]
Since $g$ is arbitrary,
${\mathcal F}^{-1}[\overline{\psi}{\mathcal F}[(-\Delta)^{i u/2}f]]
={\mathcal F}^{-1}[\overline{\psi}|\cdot|^{i u}{\mathcal F}f]$,
so that we obtain $\overline{\psi}{\mathcal F}[(-\Delta)^{i u/2}f]$
$=\overline{\psi}|\cdot|^{i u}{\mathcal F}f$ as claimed.

In the following sections, we prove the Morrey norm estimate for
the imaginary power of the Laplacian and its consequence for
the fractional power of the Laplacian. We also prove a Hardy-type
inequality and use it together with the estimate for the fractional
power of the Laplacian to obtain Heisenberg's uncertainty
inequality in Morrey spaces.

\section{Morrey norm estimates for the fractional power of the
Laplacian}

For each $u\in\mathbb{R}\setminus\{0\}$, it is known that on
$L^p(\mathbb{R}^n)$, for $1\le p\le 2$, the operator
$(-\Delta)^{i u/2}$ (defined by (\ref{eq:17917-2}))
admits an integral kernel $K_{u}$ given by
\[
K_{u}(x):=\frac{\pi^{-n/2}\Gamma\left(\frac{n+iu}{2}\right)}
{2^{-iu}\Gamma\left(\frac{-iu}{2}\right)}|x|^{-n-iu}
=C(u)|x|^{-n-iu},
\quad x\in \mathbb{R}^n
\]
(see \cite[p.~51]{Stein70}).
Here $\widehat{K_u}(\xi)=|\xi|^{iu}$ (in the distribution sense).
A close inspection of the above constant shows
\[
|C(u)|\lesssim (1+|u|)^{\frac{n}{2}}, \quad u\in\mathbb{R}.
\]
As shown in \cite{Gun02, SW01}, we have
\[
\|(-\Delta)^{i u/2}f\|_{L^p}
\lesssim
(1+|u|)^{\left|\frac{n}{p}-\frac{n}{2}\right|}\|f\|_{L^p}
\lesssim
(1+|u|)^{\frac{n}{2}}\|f\|_{L^p},
\quad f\in L^p(\mathbb{R}^n),
\]
for every $u\in\mathbb{R}$, provided that $1<p\le 2$. By duality,
the same inequality also holds for $2<p<\infty$.

Based on the discussion in Section 1,
we shall now prove that the inequality also holds in Morrey spaces
(see \cite{RT14} for similar results).
We need several lemmas and definitions.

\begin{lemma}\label{lem:17918-1}
Let $u\in \mathbb{R}$ and $1<p\le q <\infty$. Then we have
\[
\|(-\Delta)^{iu/2}f\|_{\widetilde{\mathcal{M}}^p_q}
\lesssim
(1+|u|)^{\frac{n}{2}}
\|f\|_{\widetilde{\mathcal{M}}^p_q}
\]
for every $f\in C^\infty_{\rm c} (\mathbb{R}^n)$.
\end{lemma}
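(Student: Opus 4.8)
The plan is to realise $(-\Delta)^{iu/2}$ on $C^\infty_{\mathrm{c}}(\mathbb{R}^n)$ via the kernel $K_u(x)=C(u)|x|^{-n-iu}$ recalled above, and to combine the $L^p$ estimate with the classical Morrey decomposition into a near and a far part. Fix $f\in C^\infty_{\mathrm{c}}(\mathbb{R}^n)$ and a ball $B=B(a,r)$, and write $f=f_1+f_2$ with $f_1:=f\chi_{2B}$ and $f_2:=f\chi_{\mathbb{R}^n\setminus 2B}$. Both $f_1,f_2$ are bounded with compact support, hence lie in $L^p(\mathbb{R}^n)$, and by linearity $(-\Delta)^{iu/2}f=(-\Delta)^{iu/2}f_1+(-\Delta)^{iu/2}f_2$. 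Since
\[
\|(-\Delta)^{iu/2}f\|_{\mathcal{M}^p_q}=\sup_{a\in\mathbb{R}^n,\ r>0}|B(a,r)|^{\frac1q-\frac1p}\|(-\Delta)^{iu/2}f\|_{L^p(B(a,r))},
\]
it suffices to bound, uniformly in $a$ and $r$, the quantity $|B|^{\frac1q-\frac1p}\|(-\Delta)^{iu/2}f_i\|_{L^p(B)}$ by a constant multiple of $(1+|u|)^{n/2}\|f\|_{\mathcal{M}^p_q}$ for $i=1,2$.

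For the near part I would invoke the $L^p$ boundedness recalled above, namely $\|(-\Delta)^{iu/2}g\|_{L^p}\lesssim(1+|u|)^{|n/p-n/2|}\|g\|_{L^p}\lesssim(1+|u|)^{n/2}\|g\|_{L^p}$, valid for every $p\in(1,\infty)$; crucially, we use the exponent $p$ of the Morrey space itself, \emph{not} $q$, because $\|f\|_{L^q(2B)}$ is not dominated by the Morrey norm. Applying this to $g=f_1$, using $|B|\sim|2B|$ and Hölder's inequality, gives
\[
|B|^{\frac1q-\frac1p}\|(-\Delta)^{iu/2}f_1\|_{L^p(B)}\le|B|^{\frac1q-\frac1p}\|(-\Delta)^{iu/2}f_1\|_{L^p}\lesssim(1+|u|)^{\frac n2}|2B|^{\frac1q-\frac1p}\|f\|_{L^p(2B)}\le(1+|u|)^{\frac n2}\|f\|_{\mathcal{M}^p_q}.
\]

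For the far part, note that for $x\in B$ the point $x$ lies at distance $\ge r$ from $\operatorname{supp}f_2\subset\mathbb{R}^n\setminus 2B$, so $(-\Delta)^{iu/2}f_2(x)=\int_{\mathbb{R}^n\setminus 2B}K_u(x-y)f(y)\,dy$ is an absolutely convergent integral (because $f$ has compact support), and there $|K_u(x-y)|=|C(u)|\,|x-y|^{-n}\lesssim(1+|u|)^{n/2}|y-a|^{-n}$ since $|x-y|\gtrsim|y-a|$. Splitting $\mathbb{R}^n\setminus 2B$ into the dyadic annuli $\{2^kr\le|y-a|<2^{k+1}r\}$, $k\ge1$, Hölder's inequality on each $B(a,2^{k+1}r)$ together with the definition of $\|f\|_{\mathcal{M}^p_q}$ yields
\[
|(-\Delta)^{iu/2}f_2(x)|\lesssim(1+|u|)^{\frac n2}\|f\|_{\mathcal{M}^p_q}\sum_{k=1}^\infty(2^kr)^{-\frac nq}\lesssim(1+|u|)^{\frac n2}r^{-\frac nq}\|f\|_{\mathcal{M}^p_q},\qquad x\in B,
\]
the series converging since $n/q>0$; multiplying by $|B|^{1/q}\sim r^{n/q}$ gives $|B|^{1/q-1/p}\|(-\Delta)^{iu/2}f_2\|_{L^p(B)}\lesssim(1+|u|)^{n/2}\|f\|_{\mathcal{M}^p_q}$. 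Combining the two parts and taking the supremum over $B$ gives $\|(-\Delta)^{iu/2}f\|_{\mathcal{M}^p_q}\lesssim(1+|u|)^{n/2}\|f\|_{\mathcal{M}^p_q}$; finally, since $f\in C^\infty_{\mathrm{c}}(\mathbb{R}^n)\subset L^q(\mathbb{R}^n)$ and $(-\Delta)^{iu/2}$ maps $L^q$ into $L^q$, we have $(-\Delta)^{iu/2}f\in L^q(\mathbb{R}^n)\subset\widetilde{\mathcal{M}}^p_q(\mathbb{R}^n)$, on which the $\widetilde{\mathcal{M}}^p_q$- and $\mathcal{M}^p_q$-norms coincide, which is the asserted estimate. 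I expect the only genuinely delicate points to be the justification of the kernel representation of $(-\Delta)^{iu/2}f_2$ on $B$ (handled by the compact support of $f$ and the separation between $B$ and $\mathbb{R}^n\setminus 2B$) and the bookkeeping ensuring that no step loses more than the factor $(1+|u|)^{n/2}$, which both the $L^p$ operator norm and $|C(u)|$ already respect.
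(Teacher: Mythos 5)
Your proposal is correct and follows essentially the same route as the paper: the same decomposition $f=f_1+f_2$ relative to $2B$, the $L^p$ operator bound $(1+|u|)^{n/2}$ for the near part, and the kernel bound $|K_u(x-y)|=|C(u)||x-y|^{-n}$ with a dyadic-annulus/H\"older/Morrey-norm estimate summing $(2^kr)^{-n/q}$ for the far part (the paper centres its annuli at $x$ rather than at $a$, an immaterial difference). Your closing remark justifying the passage from the $\mathcal{M}^p_q$-estimate to the $\widetilde{\mathcal{M}}^p_q$-statement is a small point the paper leaves implicit, and is welcome.
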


\begin{proof}
To prove the inequality, it is sufficient for us to establish
\[
|B(a,r)|^{\frac1q-\frac1p}
\left(\int_{B(a,r)}|(-\Delta)^{i u/2}f(x)|^p\,{\rm d}x\right)^{\frac1p}
\lesssim
(1+|u|)^{\frac{n}{2}}
\|f\|_{{\mathcal M}^p_q}
\]
for all fixed balls $B=B(a,r)$.
To do so, we adopt the technique used in \cite{Nakai94}.
For a fixed ball $B=B(a,r)$, we decompose $f:=f_1+f_2$,
where $f_1:=f\chi_{B(a,2r)}$ and $f_2:=f-f_1$.
Then by the boundedness of $(-\Delta)^{iu/2}$
on $L^p(\mathbb{R}^n)$, we have
\begin{align*}
|B(a,r)|^{\frac1q-\frac1p}
&\left(\int_{B(a,r)}|(-\Delta)^{i u/2}f_1(x)|^p\,{\rm d}x\right)^{\frac1p}
\\
&\le
|B(a,r)|^{\frac1q-\frac1p}
\left(\int_{{\mathbb R}^n}|(-\Delta)^{i u/2}f_1(x)|^p\,{\rm d}x\right)^{\frac1p}\\
&\lesssim
(1+|u|)^{\frac{n}{2}}|B(a,r)|^{\frac1q-\frac1p}
\left(\int_{{\mathbb R}^n}|f_1(x)|^p\,{\rm d}x\right)^{\frac1p}\\
&\sim
(1+|u|)^{\frac{n}{2}}|B(a,2r)|^{\frac1q-\frac1p}
\left(\int_{B(a,2r)}|f(x)|^p\,{\rm d}x\right)^{\frac1p}\\
&\lesssim
(1+|u|)^{\frac{n}{2}}\|f\|_{{\mathcal M}^p_q}.
\end{align*}
Meanwhile, for each $x \in B$, we have
\begin{align*}
|(-\Delta)^{i u/2}f_2(x)|
&
\le |C(u)|
\int_{{\mathbb R}^n \setminus B(x,r)}
\frac{|f(y)|}{|x-y|^n}\,{\rm d}y
\\
&\le |C(u)|
\sum_{k=0}^\infty
\int_{B(x,2^{k+1}r) \setminus B(x,2^k r)}
\frac{|f(y)|}{|x-y|^n}\,{\rm d}y\\
&\lesssim |C(u)|
\sum_{k=0}^\infty
\frac{1}{(2^k r)^n}
\int_{B(x,2^{k+1}r) \setminus B(x,2^k r)}|f(y)|\,{\rm d}y\\
&\lesssim |C(u)|
\sum_{k=0}^\infty
\left(
\frac{1}{(2^k r)^n}
\int_{B(x,2^{k+1}r) \setminus B(x,2^k r)}|f(y)|^p\,{\rm d}y
\right)^{\frac1p}\\
&\lesssim |C(u)|\|f\|_{{\mathcal M}^p_q}
\sum_{k=0}^\infty (2^k r)^{-\frac{n}{q}}\\
&\lesssim r^{-\frac{n}{q}}|C(u)|\|f\|_{{\mathcal M}^p_q}.
\end{align*}
Thus,
\begin{align*}
|B(a,r)|^{\frac1q-\frac1p}
&\left(\int_{B(a,r)}|(-\Delta)^{i u/2}f_2(x)|^p\,{\rm d}x\right)^{\frac1p}
\\
&\lesssim
|B(a,r)|^{\frac1q-\frac1p}
\left(\int_{B(a,r)}(r^{-\frac{n}{q}}|C(u)|\|f\|_{{\mathcal M}^p_q})^p
\,{\rm d}y\right)^{\frac1p}\\
&=
|B(a,r)|^{\frac1q}r^{-\frac{n}{q}}|C(u)|\|f\|_{{\mathcal M}^p_q}\\
&\sim
|C(u)|\|f\|_{{\mathcal M}^p_q}\\
&\lesssim
(1+|u|)^{\frac{n}{2}}\|f\|_{{\mathcal M}^p_q}.
\end{align*}
Combining the two estimates, we obtain the desired inequality.
\end{proof}

Using Lemma \ref{lem:17918-1} and density,
we give the following natural definition:

\begin{definition}\label{def:17917-0}
Given $f\in \widetilde{\mathcal{M}}^p_q(\mathbb{R}^n)$, we define
\[
(-\Delta)^{iu/2} f:= \lim_{j\to \infty} (-\Delta)^{iu/2} f_j,
\]
where $f_j\in C_{\text c}^\infty(\mathbb{R}^n)$ and $f_j\to f$ in
the $\mathcal{M}^p_q$-norm.
\end{definition}

A direct consequence of Lemma \ref{lem:17918-1}
and the above definition is:

\begin{lemma}\label{lem:170921-0}
Let $u\in \mathbb{R}$ and $1<p\le q <\infty$. Then we have
\[
\|(-\Delta)^{iu/2}f\|_{\widetilde{\mathcal{M}}^p_q}
\lesssim
(1+|u|)^{\frac{n}{2}}
\|f\|_{\widetilde{\mathcal{M}}^p_q}
\]
for every $f\in  \widetilde{\mathcal{M}}^p_q(\mathbb{R}^n)$.
\end{lemma}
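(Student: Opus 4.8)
The plan is to obtain Lemma \ref{lem:170921-0} from Lemma \ref{lem:17918-1} by the usual extension-by-density principle, so the real work is to check that Definition \ref{def:17917-0} is meaningful; once that is done, the estimate falls out by continuity of the norm. First I would verify that the limit defining $(-\Delta)^{iu/2}f$ exists. Since $f\in\widetilde{\mathcal{M}}^p_q(\mathbb{R}^n)$, which is by definition the closure of $C^\infty_{\rm c}(\mathbb{R}^n)$ in the $\mathcal{M}^p_q$-norm, we may pick $f_j\in C^\infty_{\rm c}(\mathbb{R}^n)$ with $f_j\to f$ in $\mathcal{M}^p_q$-norm; in particular $\{f_j\}$ is Cauchy. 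Applying Lemma \ref{lem:17918-1} to $f_j-f_k$ and using linearity of $(-\Delta)^{iu/2}$ gives
\[
\|(-\Delta)^{iu/2}f_j-(-\Delta)^{iu/2}f_k\|_{\widetilde{\mathcal{M}}^p_q}
\lesssim (1+|u|)^{\frac{n}{2}}\|f_j-f_k\|_{\widetilde{\mathcal{M}}^p_q},
\]
so $\{(-\Delta)^{iu/2}f_j\}$ is Cauchy. Because $\widetilde{\mathcal{M}}^p_q(\mathbb{R}^n)$ is a closed subspace of the Banach space $\mathcal{M}^p_q(\mathbb{R}^n)$, it is complete, and the limit lies in it; here one uses that each $(-\Delta)^{iu/2}f_j$ already belongs to $\widetilde{\mathcal{M}}^p_q(\mathbb{R}^n)$, since $f_j\in L^q(\mathbb{R}^n)$ and $(-\Delta)^{iu/2}$ maps $L^q(\mathbb{R}^n)$ into $L^q(\mathbb{R}^n)\subset\widetilde{\mathcal{M}}^p_q(\mathbb{R}^n)$.

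Next I would check that the limit is independent of the approximating sequence. If $g_j\to f$ is another sequence in $C^\infty_{\rm c}(\mathbb{R}^n)$, then the interlaced sequence $f_1,g_1,f_2,g_2,\dots$ also converges to $f$ in $\mathcal{M}^p_q$-norm, and the computation of the previous paragraph shows that $(-\Delta)^{iu/2}$ applied to it is Cauchy; hence the two limits coincide. Thus $(-\Delta)^{iu/2}f$ is well defined for every $f\in\widetilde{\mathcal{M}}^p_q(\mathbb{R}^n)$, and the extended map is clearly linear.

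Finally, for the stated inequality I would pass to the limit in Lemma \ref{lem:17918-1}. With $f_j\to f$ as above, continuity of the norm gives $\|f_j\|_{\widetilde{\mathcal{M}}^p_q}\to\|f\|_{\widetilde{\mathcal{M}}^p_q}$, and by the very construction of $(-\Delta)^{iu/2}f$ we also have $\|(-\Delta)^{iu/2}f_j\|_{\widetilde{\mathcal{M}}^p_q}\to\|(-\Delta)^{iu/2}f\|_{\widetilde{\mathcal{M}}^p_q}$; letting $j\to\infty$ in
\[
\|(-\Delta)^{iu/2}f_j\|_{\widetilde{\mathcal{M}}^p_q}
\lesssim (1+|u|)^{\frac{n}{2}}\|f_j\|_{\widetilde{\mathcal{M}}^p_q}
\]
yields the conclusion with the same implied constant as in Lemma \ref{lem:17918-1}. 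The only step that takes any thought is the well-definedness of the limit, i.e. the first two paragraphs; after that the estimate is immediate, so I do not expect a genuine obstacle here.
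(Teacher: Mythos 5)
Your proposal is correct and follows exactly the route the paper intends: the paper states this lemma without proof, calling it a ``direct consequence'' of Lemma \ref{lem:17918-1} and Definition \ref{def:17917-0}, and your write-up simply supplies the standard density/completeness details (Cauchy sequences, well-definedness via interlacing, passage to the limit in the norm inequality) that the authors leave implicit. No gaps.
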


\begin{definition}\label{def:17917-1}
For every $g\in \mathcal{H}^{p'}_{q'}(\mathbb{R}^n)$, we define
\[
\langle (-\Delta)^{iu/2} g, h\rangle
=
\langle g,  (-\Delta)^{-iu/2} h\rangle,
\]
for every $h\in \widetilde{\mathcal{M}}^p_q(\mathbb{R}^n)$.
\end{definition}

\begin{lemma}\label{lem:17921-1}
Let $u\in \mathbb{R}$ and $1<p\le q <\infty$. Then
\[
\|(-\Delta)^{iu/2} g\|_{\mathcal{H}^{p'}_{q'}}
\lesssim
(1+|u|)^{\frac{n}{2}}
\|g\|_{\mathcal{H}^{p'}_{q'}}
\]
for every $g\in \mathcal{H}^{p'}_{q'}(\mathbb{R}^n)$.
\end{lemma}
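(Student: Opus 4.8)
The plan is to realize the operator $(-\Delta)^{iu/2}$ on $\mathcal{H}^{p'}_{q'}(\mathbb{R}^n)$, as introduced in Definition \ref{def:17917-1}, as the Banach-space adjoint of $(-\Delta)^{-iu/2}$ acting on $\widetilde{\mathcal{M}}^p_q(\mathbb{R}^n)$, and then to invoke the elementary fact that the adjoint of a bounded linear operator between Banach spaces is bounded on the dual spaces with the same operator norm.

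Concretely, I would first record the two ingredients that are already at hand. By \cite[Theorem 4.3]{ST15} we have the duality $\big(\widetilde{\mathcal{M}}^p_q(\mathbb{R}^n)\big)^{*}=\mathcal{H}^{p'}_{q'}(\mathbb{R}^n)$ under the natural integral pairing $\langle\cdot,\cdot\rangle$, where $\frac1p+\frac1{p'}=\frac1q+\frac1{q'}=1$. By Lemma \ref{lem:170921-0}, applied with $-u$ in place of $u$ (note $(1+|-u|)^{n/2}=(1+|u|)^{n/2}$), the operator $(-\Delta)^{-iu/2}$ maps $\widetilde{\mathcal{M}}^p_q(\mathbb{R}^n)$ into itself with
\[
\|(-\Delta)^{-iu/2}h\|_{\widetilde{\mathcal{M}}^p_q}\lesssim(1+|u|)^{n/2}\,\|h\|_{\widetilde{\mathcal{M}}^p_q},\qquad h\in\widetilde{\mathcal{M}}^p_q(\mathbb{R}^n).
\]

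Then, fixing $g\in\mathcal{H}^{p'}_{q'}(\mathbb{R}^n)$, I would consider the linear functional $h\mapsto\langle g,(-\Delta)^{-iu/2}h\rangle$ on $\widetilde{\mathcal{M}}^p_q(\mathbb{R}^n)$. Since $g$ acts as a bounded functional on $\widetilde{\mathcal{M}}^p_q(\mathbb{R}^n)$ with norm $\|g\|_{\mathcal{H}^{p'}_{q'}}$ and $(-\Delta)^{-iu/2}$ is bounded on $\widetilde{\mathcal{M}}^p_q(\mathbb{R}^n)$ by the displayed estimate, this functional is bounded with norm $\lesssim(1+|u|)^{n/2}\,\|g\|_{\mathcal{H}^{p'}_{q'}}$. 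By the duality identification it is represented by a unique element of $\mathcal{H}^{p'}_{q'}(\mathbb{R}^n)$, which by formal self-adjointness is exactly $(-\Delta)^{iu/2}g$ from Definition \ref{def:17917-1}; hence $(-\Delta)^{iu/2}g\in\mathcal{H}^{p'}_{q'}(\mathbb{R}^n)$ and its norm equals that of the functional, giving the asserted bound.

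I do not anticipate a real obstacle: the entire content is the standard duality between a bounded operator and its adjoint. The only items worth a sentence of care are that the functional $h\mapsto\langle g,(-\Delta)^{-iu/2}h\rangle$ is well defined and continuous on all of $\widetilde{\mathcal{M}}^p_q(\mathbb{R}^n)$, rather than merely on $C^\infty_{\mathrm c}(\mathbb{R}^n)$ — which is precisely the content of Lemma \ref{lem:170921-0} — and that Definition \ref{def:17917-1} is consistent with this adjoint construction.
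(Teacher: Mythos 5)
Your proposal is correct and is essentially the paper's own argument: both prove the bound by pairing $(-\Delta)^{iu/2}g$ with $h\in\widetilde{\mathcal{M}}^p_q(\mathbb{R}^n)$ via Definition \ref{def:17917-1}, estimating $|\langle g,(-\Delta)^{-iu/2}h\rangle|$ using Lemma \ref{lem:170921-0}, and concluding from the duality $(\widetilde{\mathcal{M}}^p_q)^*\simeq\mathcal{H}^{p'}_{q'}$. Your version merely spells out more explicitly the identification of the resulting bounded functional with an element of $\mathcal{H}^{p'}_{q'}(\mathbb{R}^n)$.
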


\begin{proof}
For every $h\in \widetilde{\mathcal{M}}^p_q(\mathbb{R}^n)$, we have
\begin{align*}
|\langle (-\Delta)^{iu/2} g, h\rangle|
&=
|\langle g,  (-\Delta)^{-iu/2} h\rangle|
\le
\|g\|_{\mathcal{H}^{p'}_{q'}}
\| (-\Delta)^{-iu/2} h\|_{\widetilde{\mathcal{M}}^p_q}\\
&\lesssim
(1+|u|)^{\frac{n}{2}}
\|g\|_{\mathcal{H}^{p'}_{q'}}
\| h\|_{\widetilde{\mathcal{M}}^p_q}.
\end{align*}
Since $(\widetilde{\mathcal{M}^p_q})^*(\mathbb{R}^n) \simeq
\mathcal{H}^{p'}_{q'}(\mathbb{R}^n)$
\cite{Zo86}, we get the desired result.
\end{proof}

We use Lemma \ref{lem:17921-1} to give the following definition:

\begin{definition}\label{def:17921-2}
For every $f\in \mathcal{M}^{p}_{q}(\mathbb{R}^n)$, we define
\[
\langle (-\Delta)^{iu/2} f, g\rangle
=
\langle f,  (-\Delta)^{-iu/2} g\rangle,
\]
for every $g\in \mathcal{H}^{p'}_{q'}(\mathbb{R}^n)$.
\end{definition}

\begin{proposition}\label{prop:170917}
Let $u\in \mathbb{R}$ and $1<p\le q <\infty$. Then
\[
\|(-\Delta)^{iu/2} f\|_{\mathcal{M}^{p}_{q}}
\lesssim
(1+|u|)^{\frac{n}{2}}
\|f\|_{\mathcal{M}^{p}_{q}}
\]
for every $f\in \mathcal{M}^{p}_{q}(\mathbb{R}^n)$.
\end{proposition}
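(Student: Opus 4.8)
The plan is to obtain the boundedness on $\mathcal{M}^p_q(\mathbb{R}^n)$ by a duality argument, exactly mirroring how the boundedness on $\mathcal{H}^{p'}_{q'}$ was deduced in Lemma \ref{lem:17921-1} from the boundedness on $\widetilde{\mathcal{M}}^p_q$. Since $(\mathcal{H}^{p'}_{q'})^*(\mathbb{R}^n) \simeq \mathcal{M}^p_q(\mathbb{R}^n)$ by \cite{Zo86}, the Morrey norm of a function can be recovered by testing against $\mathcal{H}^{p'}_{q'}$ functions. So, given $f \in \mathcal{M}^p_q(\mathbb{R}^n)$, I would estimate $|\langle (-\Delta)^{iu/2}f, g\rangle|$ for $g \in \mathcal{H}^{p'}_{q'}(\mathbb{R}^n)$, using the defining relation in Definition \ref{def:17921-2}.

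The key steps, in order: first, invoke Definition \ref{def:17921-2} to write
\[
|\langle (-\Delta)^{iu/2}f, g\rangle| = |\langle f, (-\Delta)^{-iu/2}g\rangle|.
\]
Second, apply the duality pairing bound $|\langle f, h\rangle| \le \|f\|_{\mathcal{M}^p_q}\|h\|_{\mathcal{H}^{p'}_{q'}}$ with $h = (-\Delta)^{-iu/2}g$, giving
\[
|\langle f, (-\Delta)^{-iu/2}g\rangle| \le \|f\|_{\mathcal{M}^p_q}\,\|(-\Delta)^{-iu/2}g\|_{\mathcal{H}^{p'}_{q'}}.
\]
Third, apply Lemma \ref{lem:17921-1} to $(-\Delta)^{-iu/2}g$ (the lemma holds for every real parameter, in particular $-u$, and $(1+|-u|)^{n/2} = (1+|u|)^{n/2}$), obtaining
\[
\|(-\Delta)^{-iu/2}g\|_{\mathcal{H}^{p'}_{q'}} \lesssim (1+|u|)^{\frac{n}{2}}\|g\|_{\mathcal{H}^{p'}_{q'}}.
\]
Fourth, combine these to get $|\langle (-\Delta)^{iu/2}f, g\rangle| \lesssim (1+|u|)^{n/2}\|f\|_{\mathcal{M}^p_q}\|g\|_{\mathcal{H}^{p'}_{q'}}$, and take the supremum over $g$ in the unit ball of $\mathcal{H}^{p'}_{q'}(\mathbb{R}^n)$ to conclude $\|(-\Delta)^{iu/2}f\|_{\mathcal{M}^p_q} \lesssim (1+|u|)^{n/2}\|f\|_{\mathcal{M}^p_q}$.

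The main subtlety here is not the chain of inequalities, which is routine, but rather checking that the various definitions of $(-\Delta)^{iu/2}$ are mutually consistent — that the functional defined on $\mathcal{M}^p_q$ via Definition \ref{def:17921-2} is genuinely well-defined (i.e. independent of representations) and that $(-\Delta)^{-iu/2}g$ indeed lands in $\mathcal{H}^{p'}_{q'}$ so that Lemma \ref{lem:17921-1} applies. The latter point is exactly Lemma \ref{lem:17921-1} itself (with $u$ replaced by $-u$), and well-definedness follows because the pairing $\langle f, (-\Delta)^{-iu/2}g\rangle$ uses $f$ only through the fixed duality $(\mathcal{H}^{p'}_{q'})^* \simeq \mathcal{M}^p_q$; there is nothing to choose. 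One should also note that since the identity is used with $-u$, the estimate is symmetric in the sign of $u$, so no loss occurs. With these consistency checks in place, the argument is complete.
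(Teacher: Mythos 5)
Your argument is exactly the paper's own proof: apply Definition \ref{def:17921-2}, bound the pairing by $\|f\|_{\mathcal{M}^p_q}\|(-\Delta)^{-iu/2}g\|_{\mathcal{H}^{p'}_{q'}}$, invoke Lemma \ref{lem:17921-1} with $-u$ in place of $u$, and conclude via the duality $(\mathcal{H}^{p'}_{q'})^*(\mathbb{R}^n)\simeq\mathcal{M}^p_q(\mathbb{R}^n)$. The consistency remarks you add are sound but not needed beyond what the paper's definitions already provide.
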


\begin{proof}
For every $g\in \mathcal{H}^{p'}_{q'}(\mathbb{R}^n)$, we have
\begin{align*}
|\langle (-\Delta)^{iu/2} f, g\rangle|
&=
|\langle f,  (-\Delta)^{-iu/2} g\rangle|
\le
\|f\|_{\mathcal{M}^{p}_{q}}
\| (-\Delta)^{-iu/2} g\|_{\mathcal{H}^{p'}_{q'}}\\
&\lesssim
(1+|u|)^{\frac{n}{2}}
\| f\|_{\mathcal{M}^p_q}
\|g\|_{\mathcal{H}^{p'}_{q'}}.
\end{align*}
Since $(\mathcal{H}^{p'}_{q'})^*(\mathbb{R}^n) \simeq
\mathcal{M}^{p}_{q}(\mathbb{R}^n)$, we get the desired result.
\end{proof}

As a corollary of Proposition \ref{prop:170917}, we obtain the following
result for the fractional power of the Laplacian, which is analogous to
the interpolation inequality in \cite{Ciatti15}.
We refer the interested reader to \cite{LYY14} and references
therein for the interpolation of Morrey spaces.
\begin{theorem}\label{thm:1798-1}
Let $\alpha \ge 0$. Then, for $0\le \theta\le 1$, we have
\begin{equation}\label{est-2}
\|(-\Delta)^{\alpha\theta/2}f\|_{{\mathcal M}^{p}_{q}}
\lesssim
\|f\|^{1-\theta}_{{\mathcal M}^{p_0}_{q_0}}
\|(-\Delta)^{\alpha/2}f\|^\theta_{{\mathcal M}^{p_1}_{q_1}},
\quad f\in C_{\text c}^\infty(\mathbb{R}^n),
\end{equation}
where
\begin{align}\label{eq:1795-1}
\frac{1}{p}=\frac{1-\theta}{p_0}+\frac{\theta}{p_1}, \quad
\frac{1}{q}=\frac{1-\theta}{q_0}+\frac{\theta}{q_1}
\end{align}
with $1 < p_0 \le q_0<\infty$ and $1 < p_1 \le q_1<\infty$.
\end{theorem}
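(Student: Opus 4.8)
The plan is to obtain \eqref{est-2} from a Stein-type complex interpolation / analytic family argument built on Proposition \ref{prop:170917}, exactly in the spirit of \cite{Ciatti15}. Fix $f\in C^\infty_{\text c}(\mathbb{R}^n)$ and, for $\theta\in(0,1)$, introduce the analytic family of operators
\[
T_z:=(-\Delta)^{\alpha z/2},\qquad z=\theta+iu,\ u\in\mathbb{R},
\]
so that $T_z=(-\Delta)^{iu\alpha/2}\circ(-\Delta)^{\alpha\theta/2}$, and more to the point $T_{\theta+iu}=(-\Delta)^{\alpha\theta/2}(-\Delta)^{i(u\alpha)/2}$. The key point is that the imaginary shift is absorbed by Proposition \ref{prop:170917}: for $z$ in the closed strip $0\le\operatorname{Re}z\le 1$ we write $(-\Delta)^{\alpha z/2}g=(-\Delta)^{i\alpha\operatorname{Im}z/2}\big[(-\Delta)^{\alpha\operatorname{Re}z/2}g\big]$, and on the two boundary lines $\operatorname{Re}z=0$ and $\operatorname{Re}z=1$ the inner operator is respectively the identity and $(-\Delta)^{\alpha/2}$, so the Morrey norms of $T_zf$ on those lines are controlled, up to the polynomial factor $(1+|\alpha u|)^{n/2}$, by $\|f\|_{{\mathcal M}^{p_0}_{q_0}}$ and $\|(-\Delta)^{\alpha/2}f\|_{{\mathcal M}^{p_1}_{q_1}}$.

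Concretely, I would fix $g\in {\mathcal H}^{p'}_{q'}(\mathbb{R}^n)$ of norm $\le 1$ (with $1/p+1/p'=1/q+1/q'=1$, where $p,q$ are the interpolated exponents from \eqref{eq:1795-1}), and study the scalar function
\[
F(z):=\big\langle (-\Delta)^{\alpha z/2}f,\ g_z\big\rangle,
\]
where $\{g_z\}$ is a suitable analytic family in the dual scale interpolating between ${\mathcal H}^{p_0'}_{q_0'}$ and ${\mathcal H}^{p_1'}_{q_1'}$ so that $g_\theta=g$; the standard device is $g_z=|g|^{\beta(z)-1}\,\overline{\operatorname{sgn} g}$ with $\beta(z)$ linear in $z$ chosen to match the exponents, together with a decomposition of $g$ via \eqref{eq:171130-1} handled atom-by-atom so that the ${\mathcal H}$-norms on the two lines are bounded by $1$. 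One checks $F$ is analytic and of admissible growth on the strip (here $C^\infty_{\text c}$-ness of $f$ and the explicit kernel $C(u)|x|^{-n-iu}$ guarantee the required bounds and continuity up to the boundary), and that on $\operatorname{Re}z=j$ one has $|F(j+iu)|\lesssim (1+|u|)^{n/2}\,\|f\|^{1-j}_{{\mathcal M}^{p_0}_{q_0}}\|(-\Delta)^{\alpha/2}f\|^{j}_{{\mathcal M}^{p_1}_{q_1}}$ for $j=0,1$. Hadamard's three-lines theorem (in the form that tolerates the $(1+|u|)^{n/2}$ factor after dividing by $e^{\varepsilon(z^2-1)}$ and letting $\varepsilon\downarrow 0$) then gives $|F(\theta)|\lesssim \|f\|^{1-\theta}_{{\mathcal M}^{p_0}_{q_0}}\|(-\Delta)^{\alpha/2}f\|^{\theta}_{{\mathcal M}^{p_1}_{q_1}}$, and taking the supremum over $g$ and invoking $({\mathcal H}^{p'}_{q'})^*\simeq {\mathcal M}^p_q$ (from \cite{Zo86}) yields \eqref{est-2}. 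The endpoint cases $\theta=0$ and $\theta=1$ are trivial.

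The step I expect to be the main obstacle is the construction of the analytic family $\{g_z\}$ in the Morrey-dual scale ${\mathcal H}^{p_z'}_{q_z'}$ with control of the block/atomic norms on the two boundary lines: unlike the classical $L^p$ case, the ${\mathcal H}$-norm is defined through atomic decompositions \eqref{eq:171130-1}, so one must split $g=\sum_j\lambda_jA_j$, apply the substitution $A_j\mapsto |A_j|^{\beta(z)}\cdots$ on each atom (which stays supported in the same ball, so each modified atom is again, up to a harmless normalizing constant depending on the ball's measure, an admissible atom in the target space), and verify that the resulting series still converges in $\ell^1$ with the right bound — and that $z\mapsto g_z$ is weak-$*$ analytic with admissible growth. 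A secondary technical point is justifying that $z\mapsto(-\Delta)^{\alpha z/2}f$ is an admissible analytic ${\mathcal M}^p_q$-valued family for $f\in C^\infty_{\text c}$; this follows from the explicit kernel representation and the Morrey estimates already proved (splitting $f=f\chi_{B(a,2r)}+f\chi_{{\mathbb R}^n\setminus B(a,2r)}$ as in Lemma \ref{lem:17918-1} and differentiating under the integral sign). Once these are in place, the three-lines argument is routine.
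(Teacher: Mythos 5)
Your overall strategy (the analytic family $(-\Delta)^{\alpha z/2}$, the imaginary powers absorbed by Proposition \ref{prop:170917}, the three-lines theorem) matches the paper's, but the way you set up the duality contains a genuine gap, and the paper's proof is organized precisely to avoid it. You propose to test $(-\Delta)^{\alpha z/2}f$ against an analytic family $g_z$ in the block-space scale ${\mathcal H}^{p_z'}_{q_z'}$ with $g_\theta=g$ and controlled norms on the two boundary lines. The device $g_z=|g|^{\beta(z)-1}\overline{\operatorname{sgn}g}$ is a pointwise, nonlinear modification of $g$, so it cannot be ``handled atom-by-atom'': from a decomposition $g=\sum_j\lambda_jA_j$ you do not obtain a decomposition of $g_z$ into modified atoms, since the power substitution does not commute with the sum (the $A_j$ need not have disjoint supports, and even when they do the coefficients transform nonlinearly). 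Moreover, the ``harmless normalizing constant depending on the ball's measure'' needed to renormalize a modified atom in $L^{q_j'}$ is a power of $|B_j|$ that is unbounded over the decomposition, so the new $\ell^1$ coefficient sum is not controlled by the old one. This is not a routine verification to be deferred: the failure of exactly this kind of construction is why Morrey and block spaces are not a complex interpolation scale in the naive sense, so the step you yourself flag as ``the main obstacle'' is where the argument breaks.

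The paper sidesteps this entirely by localizing \emph{before} dualizing. Since the Morrey norm is a supremum over balls $B$ of $|B|^{1/q-1/p}\|\cdot\|_{L^p(B)}$, it suffices to bound $\|(-\Delta)^{\alpha\theta/2}f\|_{L^p(B)}$ by $|B|^{1/p-1/q}$ times the right-hand side of \eqref{est-2} for each fixed $B$. One then tests against a simple function $g$ with $\|g\|_{L^{p'}(B)}=1$ and deforms it by the classical Riesz--Thorin substitution $G(z,\cdot)=\operatorname{sgn}(g)\,|g|^{p'\left(\frac{1-z}{p_0'}+\frac{z}{p_1'}\right)}$, which is completely standard in the Lebesgue scale on the fixed ball. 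On the boundary lines H\"older's inequality and the definition of the Morrey norm produce the factors $|B|^{1/p_j-1/q_j}$, and because $\frac1p-\frac1q=(1-\theta)\left(\frac1{p_0}-\frac1{q_0}\right)+\theta\left(\frac1{p_1}-\frac1{q_1}\right)$ the three-lines theorem reassembles exactly the weight $|B|^{1/p-1/q}$; boundedness of the auxiliary function on the strip is supplied by Lemma \ref{lem:1798-1} together with the damping factor $e^{z^2}$. If you wish to keep your global-duality formulation you would need an actual Calder\'on-type construction of analytic families in the block-space scale, which is not available off the shelf; the local reduction to $L^{p'}(B)$ is the missing idea.
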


To prove Theorem \ref{thm:1798-1}, we use the following observation
which is based on \cite{MRZ16}.

\begin{lemma}\label{lem:1798-1}
Let $1\le w\le \infty$, $v\in [0,1]$, $\alpha\ge 0$, and
$B$ be any ball in $\mathbb{R}^n$.
Then for every $f\in C^\infty_{\rm c}(\mathbb{R}^n)$,
we have
\[
\|(-\Delta)^{\frac{\alpha v}{2}} f\|_{L^w(B)} \le C,
\]
where the constant $C=C(n,\alpha,B,f)$ is independent of $w$ and $v$.
\end{lemma}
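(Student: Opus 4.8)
The plan is to reduce everything to a single uniform bound by exploiting the smoothness and compact support of $f$, together with the explicit Fourier-side description \eqref{eq:17917-2} of $(-\Delta)^{\alpha v/2}$. First I would fix $f\in C^\infty_{\rm c}(\mathbb{R}^n)$ and a ball $B=B(a,R)$, and split the analysis at the frequency scale $|\xi|=1$, writing $(-\Delta)^{\alpha v/2}f = g_{\rm low} + g_{\rm high}$, where $\widehat{g_{\rm low}} = |\xi|^{\alpha v}\widehat f\,\chi_{\{|\xi|\le 1\}}$ and $\widehat{g_{\rm high}} = |\xi|^{\alpha v}\widehat f\,\chi_{\{|\xi|> 1\}}$. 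For the low-frequency part, on $\{|\xi|\le 1\}$ we have $0\le |\xi|^{\alpha v}\le 1$ uniformly in $v\in[0,1]$, so $|\widehat{g_{\rm low}}(\xi)|\le |\widehat f(\xi)|$, and since $\widehat f$ is Schwartz the function $g_{\rm low}$ is bounded on $\mathbb{R}^n$ by $\|\,|\xi|^{\alpha v}\widehat f\,\chi_{\{|\xi|\le 1\}}\|_{L^1}\le \|\widehat f\|_{L^1}$, a constant depending only on $f$. For the high-frequency part, on $\{|\xi|>1\}$ one has $|\xi|^{\alpha v}\le |\xi|^{\alpha}$ uniformly in $v$, so $|\widehat{g_{\rm high}}(\xi)|\le |\xi|^{\alpha}|\widehat f(\xi)|$, and because $\widehat f$ decays rapidly, $|\xi|^{\alpha}\widehat f\in L^1(\mathbb{R}^n)$ with norm depending only on $n,\alpha,f$; hence $g_{\rm high}$ is also bounded by a constant independent of $v$. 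Adding, $\|(-\Delta)^{\alpha v/2}f\|_{L^\infty(\mathbb{R}^n)}\le C(n,\alpha,f)$ for all $v\in[0,1]$.

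From this $L^\infty$ bound the $L^w(B)$ estimate follows immediately: for $1\le w<\infty$,
\[
\|(-\Delta)^{\frac{\alpha v}{2}}f\|_{L^w(B)}
\le |B|^{1/w}\,\|(-\Delta)^{\frac{\alpha v}{2}}f\|_{L^\infty(\mathbb{R}^n)}
\le (1+|B|)\,C(n,\alpha,f)=:C(n,\alpha,B,f),
\]
using $|B|^{1/w}\le 1+|B|$ to absorb the $w$-dependence, and for $w=\infty$ the bound $C(n,\alpha,f)$ already suffices. This $C$ is manifestly independent of both $w$ and $v$, as required.

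The only delicate point is making sure the two inequalities $|\xi|^{\alpha v}\le 1$ on $\{|\xi|\le1\}$ and $|\xi|^{\alpha v}\le|\xi|^{\alpha}$ on $\{|\xi|>1\}$ are genuinely uniform in $v\in[0,1]$ — they are, since $t\mapsto t^{\alpha v}$ is monotone in $v$ on each of the two regimes (decreasing when $t<1$, increasing when $t>1$), so the extreme values $v=0$ and $v=1$ control everything. A minor point to state carefully is that for $v=0$ the operator is the identity, so $g_{\rm low}+g_{\rm high}=f$ and the bound is trivial; for $v>0$ the Fourier representation is justified because $f\in\mathcal{S}(\mathbb{R}^n)$ makes $|\xi|^{\alpha v}\widehat f$ integrable, so $(-\Delta)^{\alpha v/2}f$ is a bona fide continuous bounded function given by the inverse Fourier transform. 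With these observations the argument is complete.
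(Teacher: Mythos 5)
Your proposal is correct and follows essentially the same route as the paper: split the Fourier integral at $|\xi|=1$, use $|\xi|^{\alpha v}\le 1$ on the low frequencies and $|\xi|^{\alpha v}\le|\xi|^{\alpha}$ on the high frequencies to get a uniform $L^\infty$ bound, then pass to $L^w(B)$ via $|B|^{1/w}\le\max(1,|B|)$. The only cosmetic difference is that the paper quantifies the decay of $\widehat f$ explicitly through $\|\mathcal{F}[(-\Delta)^N f]\|_{L^\infty}$ with $N=\lfloor n+\alpha\rfloor+1$, where you simply invoke the rapid decay of the Schwartz function $\widehat f$.
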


\begin{proof}
Let $N:=\lfloor n+\alpha\rfloor+1$. Then, for every $x\in \mathbb{R}^n$
we have
\begin{align}\label{eq:1798-3}
|(-\Delta)^{\frac{\alpha v}{2}}f(x)|
&\le
\int_{\{|\xi|<1 \}} |\xi|^{\alpha v} |\hat{f}(\xi)| \,{\rm d}\xi
+
\int_{\{|\xi|\ge 1 \}} |\xi|^{\alpha v} |\hat{f}(\xi)| \,{\rm d}\xi
\nonumber
\\
&\le
\|\hat{f}\|_{L^\infty} |B(0,1)| +
\|\mathcal{F}[(-\Delta)^Nf]\|_{L^\infty}
\int_{\{|\xi|\ge 1 \}} |\xi|^{\alpha -2N} \,{\rm d}\xi.
\end{align}
Let $E:={\rm supp}(f)$. Observe that
\begin{align}\label{eq:1798-2}
\|\hat{f}\|_{L^\infty}\le \|f\|_{L^1} \le \|f\|_{C^\infty_{\rm c}(\mathbb{R}^n)}|E|
\end{align}
and
\begin{align}\label{eq:1798-4}
\|\mathcal{F}[(-\Delta)^Nf]\|_{L^\infty}\le \|(-\Delta)^Nf\|_{L^1} \le
\|f\|_{C^\infty_{\rm c}(\mathbb{R}^n)}|E|.
\end{align}
Combining \eqref{eq:1798-3}-\eqref{eq:1798-4} and
$\int_{\{|\xi|\ge 1 \}} |\xi|^{\alpha -2N} \,{\rm d}\xi=
{\rm O}\left(\frac{1}{2N-\alpha-n}\right)$,
we get
\[
\|(-\Delta)^{\frac{\alpha v}{2}}f\|_{L^\infty(B)}
\le
C_{n,\alpha, f},
\]
where
\[
C_{n,\alpha, f}:=
\left(|B(0,1)|+ \frac{D}{2N-\alpha -n}\right) \|f\|_{C^\infty_{\rm c}(\mathbb{R}^n)}|E|
\]
with $D \gg 1$.
Consequently, for $1\le w<\infty$, we have
\[
\|(-\Delta)^{\frac{\alpha v}{2}}f\|_{L^w(B)}
\le C_{n,\alpha, f} |B|^{\frac1w}
\le C_{n,\alpha, f} \max(1, |B|),
\]
as desired.
\end{proof}

Now we are ready to prove Theorem \ref{thm:1798-1}.

\begin{proof}[Proof of Theorem \ref{thm:1798-1}]
Let $f\in C_{\text c}^\infty(\mathbb{R}^n)$.
We prove \eqref{est-2} by showing that
\begin{align}\label{eq:1795-2}
\left( \int_B |(-\Delta)^{\alpha \theta/2}f (x)|^p \,{\rm d}x\right)^{\frac1p}
\lesssim
|B|^{\frac1p-\frac1q}
\|f\|_{\mathcal{M}^{p_0}_{q_0}}^{1-\theta}
\|(-\Delta)^{\alpha/2}f\|_{\mathcal{M}^{p_1}_{q_1}}^{\theta},
\end{align}
for every fixed ball $B=B(a,r)$.
Let $p_0'$, $p_1'$, and $p'$ be defined by
$\frac{1}{p_0'}:=1-\frac{1}{p_0}$, $\frac{1}{p_1'}:=1-\frac{1}{p_1}$,
and $\frac{1}{p'}:=1-\frac{1}{p}$, respectively.
We define $S:=\{ z\in \mathbb{C}: 0 < {\rm Re}(z) < 1\}$
and let $\overline{S}$ be its closure.
For every $z\in \overline{S}$ and $x\in \mathbb{R}^n$, we define
\[
G(z,x)
:=
\begin{cases}
0, &\quad g(x)=0,
\\
{\rm sgn}(g(x)) |g(x)|^{p'\left(\frac{1-z}{p_0'}+
\frac{z}{p_1'}\right)}, &\quad g(x)\neq 0,
\end{cases}
\]
where $g$ is an arbitrary simple function with
$\|g\|_{L^{p'}(B)}=1$.
We shall apply the Three Lines Theorem to the function
$F(z)$, defined by
\[
F(z):=e^{z^2} \int_B
(-\Delta)^{\alpha z/2} f(x) G(z,x) \,{\rm d}x.
\]
Note that $F$ is continuous on $\overline{S}$ and holomorphic in $S$.
Let $z=v+iu$ where $v\in [0,1]$ and $u\in \mathbb{R}$.
Define
$\frac{1}{w}:=1-\frac{1-v}{p_0'}-\frac{v}{p_1'}$.
Then
\begin{align}\label{eq:1797-2}
|F(v+iu)|
\lesssim e^{-u^2} (1+\alpha |u|)^{\frac{n}{2}}
\| (-\Delta)^{\alpha v/2}f\|_{L^w(B)} \|G(v+iu, \cdot)\|_{L^{w'}(B)}.
\end{align}
Here we have used the boundedness of $(-\Delta)^{i \alpha u/2}$
on $L^w(B)$ and the fact that $$(-\Delta)^{\alpha z/2}=
(-\Delta)^{i \alpha u/2}
(-\Delta)^{\alpha v/2}.$$
Combining \eqref{eq:1797-2}, Lemma \ref{lem:1798-1}, and
\[
\|G(v+iu, \cdot)\|_{L^{w'}(B)}
=
\left\| |g|^{p' \left( \frac{1-v}{p_0'} +
\frac{v}{p_1'} \right)} \right\|_{L^{w'}(B)}
=
\|g\|_{L^{p'}(B)}^{\frac{p'}{w'}}=1,
\]
we have
$
\sup_{z\in \overline{S}} |F(z)|<\infty,
$
that is, $F$ is bounded on $\overline{S}$.
Next, we observe that
\begin{align*}
|F(iu)|
&\lesssim
e^{-u^2} \|(-\Delta)^{i\alpha u/2}f\|_{\mathcal{M}^{p_0}_{q_0}}
|B|^{\frac{1}{p_0}-\frac{1}{q_0}}
\|G(iu, \cdot)\|_{L^{p_0'}(B)}
\\
&\lesssim
e^{-u^2}
(1+\alpha |u|)^{\frac{n}{2}}
\|f\|_{\mathcal{M}^{p_0}_{q_0}}|B|^{\frac{1}{p_0}-\frac{1}{q_0}}
\||g|^{p'/p_0'}\|_{L^{p_0'}(B)}
\\
&\lesssim
\|f\|_{\mathcal{M}^{p_0}_{q_0}}|B|^{\frac{1}{p_0}-\frac{1}{q_0}}
\end{align*}
and similarly
\[
|F(1+iu)|\lesssim
\|(-\Delta)^{\alpha/2}f\|_{\mathcal{M}^{p_1}_{q_1}}
|B|^{\frac{1}{p_1}-\frac{1}{q_1}}.
\]
It thus follows from the Three Lines Theorem that
\begin{align*}
|F(\theta)|
&\le \sup_{u\in \mathbb{R}}|F( \theta+iu)|\\
&\le \left(\sup_{u\in \mathbb{R}}|F(iu)|\right)^{1-\theta}
\cdot
\left(\sup_{u\in \mathbb{R}}|F(1+iu)|\right)^{\theta}\\
&\lesssim
\|f\|_{\mathcal{M}^{p_0}_{q_0}}^{1-\theta}
\|(-\Delta)^{\alpha/2}f\|_{\mathcal{M}^{p_1}_{q_1}}^{\theta}
|B|^{\frac1p-\frac1q},
\end{align*}
for $0\le \theta\le 1$.
Accordingly, we obtain
\[
\left|
\int_{B} (-\Delta)^{\alpha \theta/2} f(x) g(x) \,{\rm d}x
\right|
= e^{-\theta^2} |F(\theta)|
\lesssim
\|f\|_{\mathcal{M}^{p_0}_{q_0}}^{1-\theta}
\|(-\Delta)^{\alpha/2}f\|_{\mathcal{M}^{p_1}_{q_1}}^{\theta}
|B|^{\frac1p-\frac1q}.
\]
Since $g$ is any simple function of $L^{p'}(B)$-norm 1, we conclude
that \eqref{eq:1795-2} holds.
\end{proof}

\section{A Hardy-type inequality and a Heisenberg-type inequality}

We shall now prove a Hardy-type inequality and Heisenberg's
uncertainty inequality in Morrey spaces.
According to \cite{SST11}, we have
\begin{equation}\label{ineq:170918}
\|W\cdot (-\Delta)^{-\alpha/2}f\|_{\mathcal{M}^p_q}
\lesssim
\|W\|_{\mathcal{M}^u_v} \|f\|_{\mathcal{M}^p_q},
\quad f\in \mathcal{M}^p_q(\mathbb{R}^n),
\end{equation}
where $0<\alpha<n,\ 1<p\le q<\frac{n}{\alpha},\
u=\frac{n p}{\alpha q},\
v=\frac{n}{\alpha}$.
This inequality goes back to the work of Olsen \cite{Olsen95},
so we call it Olsen's inequality.
Note that the inequality follows from H\"older's inequality and the
boundedness of the fractional integral operator $I_\alpha:=
(-\Delta)^{-\alpha/2}$ from $\mathcal{M}^p_q(\mathbb{R}^n)$
to $\mathcal{M}^s_t(\mathbb{R}^n)$
for $0<\alpha<n,\ 1<p\le q<\frac{n}{\alpha}$,
$\frac{1}{s}=\frac{1}{p}-\frac{\alpha q}{n p}$, and
$\frac{s}{t}=\frac{p}{q}$ (see also \cite{GE09}).
Note that through its Fourier transform, one may recognize
$(-\Delta)^{-\alpha/2}$ as the convolution operator whose
kernel is a multiple of $|\cdot|^{\alpha-n}$, which is initially
defined on $C_{\text c}^\infty(\mathbb{R}^n)$ (see \cite{Stein70}).

As a consequence of the inequality (\ref{ineq:170918}), we have:

\begin{proposition}\label{prop-2}
Let $1<p\le q<\infty$ and $0<\alpha<\frac{n}{q}$.
Then we have
\begin{equation}\label{est-3}
\||\cdot|^{-\alpha} g\|_{\mathcal{M}^p_q}
\lesssim
\|(-\Delta)^{\alpha/2}g\|_{\mathcal{M}^p_q}
\end{equation}
for every $g \in C_{\text c}^\infty(\mathbb{R}^n)$.
\end{proposition}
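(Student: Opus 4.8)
The plan is to deduce \eqref{est-3} from Olsen's inequality \eqref{ineq:170918} with the weight $W(x):=|x|^{-\alpha}$. Fix $g\in C_{\text c}^\infty(\mathbb{R}^n)$ and set $f:=(-\Delta)^{\alpha/2}g$. First I would check that $f\in\mathcal{M}^p_q(\mathbb{R}^n)$ and that $(-\Delta)^{-\alpha/2}f=g$. Since $\widehat{f}(\xi)=|\xi|^{\alpha}\widehat{g}(\xi)$ lies in $L^1(\mathbb{R}^n)$ (as $\widehat{g}$ is Schwartz and $|\xi|^{\alpha}$ is locally integrable of polynomial growth), $f$ is bounded and continuous, and a standard estimate for the kernel of $(-\Delta)^{\alpha/2}$ away from ${\rm supp}\,g$ gives $f(x)={\rm O}(|x|^{-n-\alpha})$ as $|x|\to\infty$; hence $f\in L^{q}(\mathbb{R}^n)\subset\mathcal{M}^p_q(\mathbb{R}^n)$. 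This decay also makes the integral defining $(-\Delta)^{-\alpha/2}f=I_{\alpha}f$ (convolution against a multiple of $|\cdot|^{\alpha-n}$) absolutely convergent, and comparing Fourier transforms yields $I_{\alpha}f=g$. Consequently $|\cdot|^{-\alpha}g=W\cdot(-\Delta)^{-\alpha/2}f$.

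Next, for $1<p<q<\infty$ with $0<\alpha<n/q$, I would apply \eqref{ineq:170918} to this $f$ and $W$, with $u=\frac{np}{\alpha q}$ and $v=\frac{n}{\alpha}$; the hypotheses $1<p\le q<\tfrac{n}{\alpha}$ of \eqref{ineq:170918} are exactly our assumptions, so
\[
\|\,|\cdot|^{-\alpha}g\,\|_{\mathcal{M}^p_q}
=\|W\cdot(-\Delta)^{-\alpha/2}f\|_{\mathcal{M}^p_q}
\lesssim\|W\|_{\mathcal{M}^u_v}\,\|f\|_{\mathcal{M}^p_q}
=\|W\|_{\mathcal{M}^u_v}\,\|(-\Delta)^{\alpha/2}g\|_{\mathcal{M}^p_q},
\]
and it only remains to see that $\|W\|_{\mathcal{M}^u_v}<\infty$. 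Here the key observation is that $W$ is homogeneous of degree $-\alpha=-n/v$, which is precisely the degree for which the inner quantity in the definition of $\|W\|_{\mathcal{M}^u_v}$ is invariant under the dilation $B(a,r)\mapsto B(a/r,1)$; together with translations this reduces the supremum to radius-one balls. For $B(a,1)$ with $|a|\le 3$ one has $B(a,1)\subset B(0,4)$ and $\int_{B(0,4)}|y|^{-\alpha u}\,{\rm d}y<\infty$ because $\alpha u=np/q<n$ when $p<q$; for $|a|>3$ one has $|y|\sim|a|$ on $B(a,1)$, so the quantity is $\sim|a|^{-\alpha}\le 1$. This proves \eqref{est-3} in the range $p<q$.

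The delicate point, and the main obstacle, is the endpoint $p=q$. Then $\mathcal{M}^p_q=L^p$, the exponents collapse to $u=v=n/\alpha$, and $W=|\cdot|^{-\alpha}\notin L^{n/\alpha}(\mathbb{R}^n)$, so \eqref{ineq:170918} cannot be applied verbatim. In this case \eqref{est-3} is the classical Hardy (Stein--Weiss) inequality $\|\,|x|^{-\alpha}h\,\|_{L^p}\lesssim\|(-\Delta)^{\alpha/2}h\|_{L^p}$, valid for $1<p<\infty$ and $0<\alpha<n/p$; equivalently it is the $L^p$-boundedness of $h\mapsto|x|^{-\alpha}I_{\alpha}h$, which rests on the membership $|\cdot|^{-\alpha}\in L^{n/\alpha,\infty}(\mathbb{R}^n)$ together with a weak-type refinement of Olsen's inequality (or, alternatively, on the $A_p$-property of the power weight $|x|^{-\alpha p}$). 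Thus the argument divides into the genuinely Morrey range $p<q$, handled by \eqref{ineq:170918}, and the Lebesgue endpoint $p=q$, handled by the classical weighted estimate.
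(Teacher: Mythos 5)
Your proposal is correct and follows essentially the same route as the paper: set $f:=(-\Delta)^{\alpha/2}g$, verify $f\in\mathcal{M}^p_q$ and $(-\Delta)^{-\alpha/2}f=g$ (the content of the paper's Lemmas 3.3--3.4), apply Olsen's inequality with $W=|\cdot|^{-\alpha}\in\mathcal{M}^u_v$ when $p<q$, and handle the endpoint $p=q$ by a weak-type/Lorentz argument with $|\cdot|^{-\alpha}\in L^{n/\alpha,\infty}$ followed by Marcinkiewicz interpolation, which is exactly what the paper does via \cite[Proposition 4.1]{Nakai17}. Your explicit verification that $\|W\|_{\mathcal{M}^u_v}<\infty$ (via homogeneity and $\alpha u=np/q<n$) is a welcome elaboration of the paper's one-line remark.
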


\begin{remark}
The inequality (\ref{est-3}) may be viewed as
a Hardy-type inequality in Morrey spaces.
\end{remark}

To prove the proposition, we need some lemmas.

\begin{lemma}
Let $0<\alpha<n$. If $g \in C^\infty_{\rm c}({\mathbb R}^n)$,
then we have
\[
|(-\Delta)^{\alpha/2}g(x)| \lesssim \min(1,|x|^{-\alpha-n}).
\]
In particular, $f=(-\Delta)^{\alpha/2}g \in L^1({\mathbb R}^n) \cap
L^\infty({\mathbb R}^n)$.
\end{lemma}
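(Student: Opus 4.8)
The plan is to reduce the whole statement to the pointwise bound and then read that bound off the Fourier representation of $(-\Delta)^{\alpha/2}g$. The ``in particular'' clause is then automatic: $\bigl|(-\Delta)^{\alpha/2}g(x)\bigr|\lesssim 1$ makes $f:=(-\Delta)^{\alpha/2}g$ belong to $L^\infty(\mathbb R^n)$, and since
\[
\int_{\mathbb R^n}\min\bigl(1,|x|^{-n-\alpha}\bigr)\,dx
=|B(0,1)|+\int_{|x|>1}|x|^{-n-\alpha}\,dx<\infty
\]
(because $\alpha>0$), the pointwise bound also puts $f$ in $L^1(\mathbb R^n)$. So everything comes down to the pointwise estimate. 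Since $g\in C^\infty_{\rm c}(\mathbb R^n)$, its Fourier transform $\widehat g$ is a Schwartz function, so by \eqref{eq:17917-2} we have $\widehat f(\xi)=|\xi|^\alpha\widehat g(\xi)\in L^1(\mathbb R^n)$ --- integrable near the origin because $\alpha>-n$, and at infinity by the rapid decay of $\widehat g$. Fourier inversion then writes $f$ as the bounded continuous function $f(x)=(2\pi)^{-n}\int_{\mathbb R^n}e^{ix\cdot\xi}|\xi|^\alpha\widehat g(\xi)\,d\xi$ with $\|f\|_{L^\infty}\le(2\pi)^{-n}\bigl\||\cdot|^\alpha\widehat g\bigr\|_{L^1}$, which is the ``$\lesssim 1$'' half; it remains to show $|f(x)|\lesssim|x|^{-n-\alpha}$ for $|x|\ge 1$.

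For the decay I would fix $x$ with $|x|\ge 1$, choose $\chi\in C^\infty_{\rm c}(B(0,1))$ with $\chi\equiv 1$ on $B(0,1/2)$, and split
\[
f(x)=(2\pi)^{-n}\!\int_{\mathbb R^n}\!e^{ix\cdot\xi}\chi(|x|\xi)|\xi|^\alpha\widehat g(\xi)\,d\xi
+(2\pi)^{-n}\!\int_{\mathbb R^n}\!e^{ix\cdot\xi}\bigl(1-\chi(|x|\xi)\bigr)|\xi|^\alpha\widehat g(\xi)\,d\xi=:J_1(x)+J_2(x).
\]
The term $J_1$ is supported in $|\xi|\le|x|^{-1}$, so $|J_1(x)|\le(2\pi)^{-n}\|\widehat g\|_{L^\infty}\int_{|\xi|\le|x|^{-1}}|\xi|^\alpha\,d\xi\lesssim|x|^{-n-\alpha}$. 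In $J_2$ the integrand is a genuinely smooth function of $\xi$, because the factor $1-\chi(|x|\xi)$ vanishes near the origin and removes the singularity of $|\xi|^\alpha$; picking an integer $N$ with $2N>n+\alpha$ and integrating by parts $N$ times via $e^{ix\cdot\xi}=|x|^{-2N}(-\Delta_\xi)^N e^{ix\cdot\xi}$ (no boundary terms, as the integrand is Schwartz in $\xi$) gives
\[
J_2(x)=(2\pi)^{-n}|x|^{-2N}\int_{\mathbb R^n}e^{ix\cdot\xi}(-\Delta_\xi)^N\bigl[\bigl(1-\chi(|x|\xi)\bigr)|\xi|^\alpha\widehat g(\xi)\bigr]\,d\xi.
\]
Expanding by the Leibniz rule, in each resulting term every derivative falling on $\chi(|x|\xi)$ contributes a factor $|x|$ and restricts the support to $|\xi|\sim|x|^{-1}$, while $\bigl|\partial^k(|\xi|^\alpha)\bigr|\lesssim|\xi|^{\alpha-k}$ and the derivatives of $\widehat g$ stay Schwartz; a routine homogeneity count --- using $\int_{|\xi|\gtrsim|x|^{-1}}|\xi|^{\alpha-2N}\,d\xi\lesssim|x|^{2N-n-\alpha}$ (valid since $2N>n+\alpha$) and absorbing the remaining $O(1)$ contributions into $|x|^{-2N}\le|x|^{-n-\alpha}$ --- yields $|J_2(x)|\lesssim|x|^{-n-\alpha}$. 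Adding the two bounds completes the pointwise estimate.

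The main obstacle is the sharpness of the exponent: a naive argument gives only $|x|^{-\alpha}$, and the extra $n$ units of decay come entirely from the low frequencies, where $\widehat f(\xi)=|\xi|^\alpha\widehat g(\xi)$ is homogeneous of degree $\alpha$ near $0$ and therefore has inverse transform homogeneous of degree $-n-\alpha$. Because $|\xi|^\alpha$ is not smooth at the origin, a plain ``multiply $f$ by $x^\beta$ and take Fourier transforms'' argument cannot detect this, which is why the frequency splitting above --- estimating $J_1$ by brute size and postponing the integration by parts in $J_2$ until the cutoff has killed the singularity --- is needed. An equally workable route is the hypersingular representation $(-\Delta)^{\alpha/2}g(x)=c_{n,\ell,\alpha}\int_{\mathbb R^n}|y|^{-n-\alpha}\,\Delta^\ell_y g(x)\,dy$ with $\ell$ an even integer exceeding $\alpha$, $\Delta^\ell_y g(x)=\sum_{k=0}^\ell(-1)^k\binom{\ell}{k}g(x-ky)$, and $c_{n,\ell,\alpha}\neq 0$: once ${\rm supp}\,g\subset B(0,R)$, the difference $\Delta^\ell_y g(x)$ vanishes unless $y$ lies in one of the small balls $B(x/k,R/k)$, $1\le k\le\ell$, on each of which $|y|\gtrsim|x|$ as soon as $|x|\ge 2R$, and this again forces $|(-\Delta)^{\alpha/2}g(x)|\lesssim|x|^{-n-\alpha}$.
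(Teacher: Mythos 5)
Your proof is correct, and it proves the same two halves the paper does: the bound $\lesssim 1$ via $\||\cdot|^\alpha\widehat g\|_{L^1}<\infty$ (which is exactly how the paper gets it, by reference to the proof of its Lemma \ref{lem:1798-1}), and the decay $\lesssim|x|^{-n-\alpha}$ via frequency-side analysis. Where you differ is in the organization of the decay estimate. The paper performs a dyadic (Littlewood--Paley) decomposition of the low frequencies, $\sum_{j\le 0}\mathcal F^{-1}[|\cdot|^\alpha\varphi_j\mathcal Fg]$, proves for each piece both a crude bound $2^{j(\alpha+n)}$ and an integration-by-parts bound $|x|^{-2N}2^{j(\alpha+n-2N)}$, and then sums the minimum of the two; you instead cut once at the critical scale $|\xi|\sim|x|^{-1}$, bound the low-frequency piece by brute size and integrate by parts only on the complement. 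These are two implementations of the same estimate --- the paper's sum of minima is dominated precisely by the terms near the crossover $2^j\sim|x|^{-1}$, which is where you place your single cutoff --- so the numerology ($2N>n+\alpha$, decay $|x|^{-n-\alpha}$ driven by the homogeneity of $|\xi|^\alpha$ at the origin) is identical. Your version is a bit shorter and avoids summing a series; the paper's dyadic version has the advantage that each piece is estimated uniformly in $x$ before the $x$-dependence enters, which is the form that generalizes when the symbol is not exactly homogeneous. Your closing remark about the hypersingular finite-difference representation $c_{n,\ell,\alpha}\int|y|^{-n-\alpha}\Delta^\ell_y g(x)\,dy$ is a genuinely different, physical-space route (closer in spirit to how the paper handles the kernel of $(-\Delta)^{iu/2}$ in Lemma \ref{lem:17918-1}) and would also work, but as you present it it is only a sketch; the Fourier-side argument you actually carry out is complete.
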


\begin{proof}
We have already seen that $|(-\Delta)^{\alpha/2}g(x)| \lesssim 1$
in the proof of  Lemma  \ref{lem:1798-1}.
Now let $\psi \in C^\infty_{\text c}({\mathbb R}^n)$
be such that
$\chi_{B(1)} \le \psi \le \chi_{B(2)}$, where $B(r)$
denotes the ball centered at the origin of radius $r$.
Define
$\varphi_j(\xi)=\psi(2^{-j}\xi)-\psi(2^{-j+1}\xi)$.
We decompose
\[
(-\Delta)^{\alpha/2}g(x)
=
{\mathcal F}^{-1}[|\cdot|^{\alpha}(1-\psi){\mathcal F}g](x)
+
\sum_{j=-\infty}^0
{\mathcal F}^{-1}[|\cdot|^{\alpha}\varphi_j{\mathcal F}g](x).
\]
Since
$h={\mathcal F}^{-1}[|\cdot|^{\alpha}(1-\psi){\mathcal F}g]$
belongs to ${\mathcal S}({\mathbb R}^n)$,
we need to handle the second term.
Using a crude estimate
${\mathcal F}g \in L^\infty({\mathbb R}^n)$,
we get
\[
|{\mathcal F}^{-1}[|\cdot|^{\alpha}\varphi_j{\mathcal F}g](x)|
\lesssim
2^{j\alpha}
\||2^{-j}\cdot|^{\alpha}\varphi_j{\mathcal F}g\|_{L^1}
\sim 2^{j(\alpha+n)}.
\]
Let $N \in {\mathbb N}$ be large enough.
Then as before,
\begin{align*}
|x|^{2N}|{\mathcal F}^{-1}[|\cdot|^{\alpha}\varphi_j{\mathcal F}g](x)|
&=
|{\mathcal F}^{-1}[\Delta^N[|\cdot|^{\alpha}\varphi_j{\mathcal F}g]](x)|\\
&\lesssim
\sum_{\beta \in ({\mathbb N} \cup \{0\})^n, |\beta|=2N}
\|\partial^\beta[|\cdot|^{\alpha}\varphi_j{\mathcal F}g]\|_{L^1}.
\end{align*}
Here and below let $\beta$ be such that $|\beta|=2N$.
Then
\[
|\partial^\beta[|\xi|^{\alpha}\varphi_j(\xi){\mathcal F}g(\xi)]|
\lesssim
\sum_{\beta_1+\beta_2+\beta_3=\beta}
|\partial^{\beta_1}[|\xi|^{\alpha}]|
|\partial^{\beta_2}\varphi_j(\xi)|
|\partial^{\beta_3}{\mathcal F}g(\xi)|.
\]
Noting that
$\varphi_j(\xi)$ vanishes outside
$\{2^{j-2} \le |\xi| \le 2^{j+2}\}$,
we have
\[
\partial^{\beta_1}[|\xi|^{\alpha}]={\rm O}(|\xi|^{\alpha-|\beta_1|}), \quad
\partial^{\beta_2}\varphi_j(\xi)={\rm O}(|\xi|^{-|\beta_2|}), \quad
|\partial^{\beta_3}{\mathcal F}g(\xi)|\lesssim 1 \lesssim 2^{-j|\beta_3|},
\]
as $\xi \to 0$.
Thus,
\begin{align*}
|\partial^\beta[|\xi|^{\alpha}\varphi_j(\xi){\mathcal F}g(\xi)]|
&\ \lesssim
\sum_{\beta_1+\beta_2+\beta_3=\beta}
|\xi|^{\alpha-|\beta_1|}
|\xi|^{-|\beta_2|}
2^{-j|\beta_3|}\chi_{\{2^{j-2} \le |\xi|\le 2^{j+2}\}}(\xi)\\
&\lesssim
2^{j(\alpha-2N)}\chi_{\{|\xi| \le 2^{j+2}\}}(\xi)
\end{align*}
and hence
\[
\|\partial^\beta[|\cdot|^{\alpha}\varphi_j{\mathcal F}g]\|_{L^1}=
{\rm O}(2^{j(\alpha+n-2N)})
\]
as $j \to -\infty$.
As a result,
\begin{align*}
|(-\Delta)^{\alpha/2}g(x)|
&\lesssim
|x|^{-\alpha-n}+
\sum_{j=-\infty}^0
\min(
|x|^{-2N}2^{j(\alpha+n-2N)},2^{j(\alpha+n)})\\
&\le
|x|^{-\alpha-n}
+
|x|^{-\alpha-n}
\sum_{j=-\infty}^\infty
\min(
|x|^{\alpha+n-2N}2^{j(\alpha+n-2N)},|x|^{\alpha+n}2^{j(\alpha+n)}).
\end{align*}
Noticing that
\begin{align*}
\sum_{j=-\infty}^\infty
&\min(
|x|^{\alpha+n-2N}2^{j(\alpha+n-2N)},|x|^{\alpha+n}2^{j(\alpha+n)})
\\
&\le
\sum_{j=-\infty; 2^j|x|\le 1}^\infty
(2^j|x|)^{\alpha+n}
+\sum_{j=-\infty; 2^j|x|>1}^\infty
(2^j|x|)^{\alpha+n-N}
\\
&\lesssim
\sum_{j=-\infty; 2^j|x|\le 1}^\infty
\int_{2^j|x|}^{2^{j+1}|x|} t^{\alpha+n-1} \ dt
+\sum_{j=-\infty; 2^j|x|>1}^\infty
\int_{2^{j-1}|x|}^{2^{j}|x|} t^{\alpha+n-N-1} \ dt
\\
&\le
\int_{0}^{2} t^{\alpha+n-1} \ dt
+
\int_{1/2}^{\infty} t^{\alpha+n-N-1} \ dt
\lesssim 1,
\end{align*}
we conclude that
\[
|(-\Delta)^{\alpha/2}g(x)|
\lesssim|x|^{-\alpha-n},
\]
as desired.
\end{proof}

\begin{lemma}\label{lemma:170919}
Let $1\le p\le q<\infty$ and $0<\alpha <n$.
For $g\in C^{\infty}_{\rm c}(\mathbb{R}^n)$,
define $f:=(-\Delta)^{\alpha/2}g$.
Then $f\in \mathcal{M}^p_q(\mathbb{R}^n)$ and
$(-\Delta)^{-\alpha/2}f=g$ pointwise.
\end{lemma}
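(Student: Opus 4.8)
The plan is to prove the two assertions separately. The membership $f\in\mathcal{M}^p_q(\mathbb{R}^n)$ is immediate from the preceding lemma: it gives $|f(x)|=|(-\Delta)^{\alpha/2}g(x)|\lesssim\min(1,|x|^{-\alpha-n})$, and since $\alpha+n>n$ the tail $\int_{|x|>1}|x|^{-q(\alpha+n)}\,dx$ converges for every $q\ge1$, so $f\in L^q(\mathbb{R}^n)$. The embedding $L^q(\mathbb{R}^n)\hookrightarrow\mathcal{M}^p_q(\mathbb{R}^n)$ for $p\le q$, which follows from H\"older's inequality on each ball ($|B|^{\frac1q-\frac1p}\|f\|_{L^p(B)}\le\|f\|_{L^q(B)}\le\|f\|_{L^q}$), then gives $f\in\mathcal{M}^p_q(\mathbb{R}^n)$.

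For the identity $(-\Delta)^{-\alpha/2}f=g$ I would first record two facts. From the construction $(-\Delta)^{\alpha/2}g=\mathcal{F}^{-1}[|\cdot|^\alpha\widehat g]$ (legitimate because $|\cdot|^\alpha\widehat g\in L^1$ since $\alpha>0$ and $\widehat g\in\mathcal{S}$) together with $f\in L^1$, the Fourier transform of $f$ is the continuous function $\widehat f(\xi)=|\xi|^\alpha\widehat g(\xi)$, and in particular $f\in C_0(\mathbb{R}^n)$. Next, $(-\Delta)^{-\alpha/2}=I_\alpha$ is convolution with $c_{n,\alpha}|\cdot|^{\alpha-n}$, the constant being normalized (see \cite{Stein70}) so that $\widehat{I_\alpha\varphi}=|\cdot|^{-\alpha}\widehat\varphi$ for $\varphi\in\mathcal{S}$; moreover the decay $|f(x)|\lesssim\min(1,|x|^{-\alpha-n})$ makes $(I_\alpha f)(x)=c_{n,\alpha}\int_{\mathbb{R}^n}|x-y|^{\alpha-n}f(y)\,dy$ absolutely convergent for every $x$ and continuous in $x$ (the tail integrand is $\mathrm{O}(|y|^{-2n})$, and the local singularity $|x-y|^{\alpha-n}$ is integrable since $\alpha-n>-n$).

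Then I would test against an arbitrary $\varphi\in C^\infty_{\rm c}(\mathbb{R}^n)$. Fubini's theorem---valid since $\int_{\mathbb{R}^n}|\varphi(x)|\,(I_\alpha|f|)(x)\,dx<\infty$, the function $I_\alpha|f|$ being locally bounded---moves the potential onto the test function: $\int(I_\alpha f)\varphi=\int f\,(I_\alpha\varphi)$. Writing $I_\alpha\varphi=\mathcal{F}^{-1}[|\cdot|^{-\alpha}\widehat\varphi]$ with $|\cdot|^{-\alpha}\widehat\varphi\in L^1$, inserting the integral, applying Fubini once more (now with $\|f\|_{L^1}\,\||\cdot|^{-\alpha}\widehat\varphi\|_{L^1}<\infty$), and substituting $\widehat f(\xi)=|\xi|^\alpha\widehat g(\xi)$, the powers $|\xi|^{\pm\alpha}$ cancel and the multiplication formula collapses everything to $\int g\,\varphi$. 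Hence $\langle I_\alpha f,\varphi\rangle=\langle g,\varphi\rangle$ for every $\varphi\in C^\infty_{\rm c}(\mathbb{R}^n)$, so $I_\alpha f=g$ almost everywhere, and since both sides are continuous the equality holds at every point.

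The main obstacle is bookkeeping rather than analysis: one must justify each interchange of integrals (all controlled by the decay from the preceding lemma and by $\widehat g\in\mathcal{S}$) and keep the normalizing constant $c_{n,\alpha}$ consistent with the Fourier convention in \eqref{eq:17917-2}. The one point deserving a second look is the cancellation $|\xi|^{-\alpha}\cdot|\xi|^\alpha=1$ near $\xi=0$, but this causes no difficulty precisely because $\widehat f(\xi)=|\xi|^\alpha\widehat g(\xi)$ vanishes to order $\alpha$ there, so no non-integrable singularity is ever created.
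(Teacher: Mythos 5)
Your proof is correct and follows essentially the same route as the paper: the membership $f\in\mathcal{M}^p_q(\mathbb{R}^n)$ comes from $f\in L^1\cap L^\infty$ (equivalently, the decay bound of the preceding lemma) together with $L^q\hookrightarrow\mathcal{M}^p_q$, and the identity comes from $\widehat f=|\cdot|^\alpha\widehat g$ pointwise. The only difference is that the paper simply asserts that $|\cdot|^{-\alpha}\widehat f=\widehat g$ implies $(-\Delta)^{-\alpha/2}f=g$ pointwise, whereas you justify this last implication in detail via the convolution representation of $I_\alpha$, Fubini, and the multiplication formula --- a welcome filling-in of a step the paper leaves implicit.
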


\begin{proof}
We have proved that
$f \in L^1({\mathbb R}^n) \cap L^\infty({\mathbb R}^n)$.
Consequently,
\[
\|f\|_{\mathcal{M}^p_q} \le \|f\|_{L^q} \le \|f\|_{L^\infty}^{1-\frac1q}
\|f\|_{L^1}^{\frac1q}<\infty.
\]
[This justifies the right-hand side of \eqref{est-3}.]
Next, $|\cdot|^\alpha\widehat{g} \in L^1(\mathbb{R}^n)$ and
$f=\mathcal{F}^{-1}(|\cdot|^\alpha \widehat{g})\in L^1(\mathbb{R}^n)$.
Hence $\widehat{f}=|\cdot|^\alpha\widehat{g}$ pointwise,
and so $|\cdot|^{-\alpha}\widehat{f}=\widehat{g}$ pointwise.
This tells us that $(-\Delta)^{-\alpha/2}f=g$ pointwise.
\end{proof}

Now we come to the proof of Proposition \ref{prop-2}.

\begin{proof}[Proof of Proposition \ref{prop-2}]
For $1<p<q<\infty$ and $0<\alpha<\frac{n}{q}$, we have
$u=\frac{n p}{\alpha q}<\frac{n}{\alpha}=v$.
By computing directly its Morrey norm, we obtain that
$W(\cdot):=|\cdot|^{-\alpha} \in \mathcal{M}^u_v(\mathbb{R}^n)$.
Hence, for $g\in C_{\text c}^\infty(\mathbb{R}^n)$, we
take $f:=(-\Delta)^{\alpha/2}g$,
which is a function in
$\mathcal{M}^p_q(\mathbb{R}^n)$ by Lemma \ref{lemma:170919}.
Moreover, $g=(-\Delta)^{-\alpha/2}f \in \mathcal{M}^s_t(\mathbb{R}^n)$
where $\frac{1}{s}=\frac{1}{p}-\frac{\alpha q}{np}$ and $\frac{s}{t}=
\frac{p}{q}$, so that Olsen's inequality (\ref{ineq:170918}) gives
\[
\||\cdot|^{-\alpha} g\|_{\mathcal{M}^p_q}
\lesssim \|W\|_{\mathcal{M}^u_v}
\|(-\Delta)^{\alpha/2}g\|_{\mathcal{M}^p_q}.
\]
For $1\le p=q<\frac{n}{\alpha}$, we use the fact that
$f\in L^q(\mathbb{R}^n)$ and that $g=(-\Delta)^{-\alpha/2}f
\in wL^t(\mathbb{R}^n)$ for $\frac{1}{t}=\frac{1}{q}-\frac{\alpha}{n}$
with $\|(-\Delta)^{-\alpha/2}f\|_{wL^t} \lesssim \|f\|_{L^q}$ (where
$wL^t(\mathbb{R}^n)$ denotes the weak Lebesgue space of exponent $t$).
It thus follows from \cite[Proposition 4.1]{Nakai17} that
\[
\||\cdot|^{-\alpha}g\|_{wL^q}=\|W(-\Delta)^{-\alpha/2}f\|_{wL^q}
\lesssim \|W\|_{wL^v}
\|(-\Delta)^{-\alpha/2}f\|_{wL^t} \lesssim \|W\|_{wL^v}\|f\|_{L^q},
\]
where $v=\frac{n}{\alpha}$ (as above). This inequality holds for
every $1\le q<\frac{n}{\alpha}$. By the Marcinkiewicz interpolation
theorem, we obtain
\[
\||\cdot|^{-\alpha}g\|_{L^q}\lesssim \|W\|_{wL^v}\|f\|_{L^q} =
\|W\|_{wL^v}\|(-\Delta)^{\alpha/2}g\|_{L^q},
\]
for $1<q<\frac{n}{\alpha}$. This completes the proof.
\end{proof}

As a corollary of Proposition \ref{prop-2}, we obtain the following
result (which is analogous to \cite[Corollary 5.2]{Ciatti15}).

\begin{theorem}\label{thm-3}
Let $1<p\le q<\infty$, $1\le p_2\le q_2<\infty$, $\beta>0$, and
$0<\gamma<\frac{n}{q}$.
If $\frac{\beta+\gamma}{p_0}=\frac{\beta}{p}+\frac{\gamma}{p_2}$ and
$\frac{\beta+\gamma}{q_0}=\frac{\beta}{q}+\frac{\gamma}{q_2}$,
then
\[
\|g\|_{\mathcal{M}^{p_0}_{q_0}}
\lesssim
\||\cdot|^{\beta}g\|^{\gamma/(\beta+\gamma)}_{\mathcal{M}^{p_2}_{q_2}}
\|(-\Delta)^{\gamma/2}g\|^{\beta/(\beta+\gamma)}_{\mathcal{M}^p_q}
\]
for every $g \in C_{\text c}^\infty(\mathbb{R}^n)$.
\end{theorem}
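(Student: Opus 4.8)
The plan is to combine the interpolation inequality of Theorem \ref{thm:1798-1} with the Hardy-type inequality of Proposition \ref{prop-2}, following the scheme of \cite[Corollary 5.2]{Ciatti15}. First I would introduce the parameter $\alpha:=\beta+\gamma$ and $\theta:=\frac{\beta}{\beta+\gamma}\in(0,1)$, so that $\alpha\theta=\beta$ and $\alpha(1-\theta)=\gamma$. With this notation the hypotheses $\frac{\beta+\gamma}{p_0}=\frac{\beta}{p}+\frac{\gamma}{p_2}$ and $\frac{\beta+\gamma}{q_0}=\frac{\beta}{q}+\frac{\gamma}{q_2}$ become exactly the balance conditions \eqref{eq:1795-1} of Theorem \ref{thm:1798-1}, namely $\frac1{p_0}=\frac{1-\theta}{p_2}+\frac{\theta}{p}$ and $\frac1{q_0}=\frac{1-\theta}{q_2}+\frac{\theta}{q}$ (playing the role of the indices $(p,q)$, $(p_0,q_0)$, $(p_1,q_1)$ in the statement there, with the $``0"$-slot taken by $(p_2,q_2)$ and the $``1"$-slot by $(p,q)$).

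The main step is to apply Theorem \ref{thm:1798-1} not to $g$ directly but to $h:=(-\Delta)^{-\gamma/2}g$; equivalently, one runs the Three Lines argument with $f$ replaced by a function whose top-order derivative is controlled by $g$. Concretely, set $f:=(-\Delta)^{\gamma/2}g$, which by Lemma \ref{lemma:170919} lies in every $\mathcal{M}^p_q(\mathbb{R}^n)$ and satisfies $(-\Delta)^{-\gamma/2}f=g$ pointwise, and for which $(-\Delta)^{-\gamma/2}f\in L^1\cap L^\infty$ as well. Apply the interpolation inequality \eqref{est-2} with exponent $\alpha=\beta+\gamma$ replaced by $\gamma$ on the right side — more precisely, observe that $(-\Delta)^{(\beta+\gamma)\theta/2}h=(-\Delta)^{\beta/2}(-\Delta)^{-\gamma/2}g\cdot(\dots)$; the cleanest route is to note
\[
g=(-\Delta)^{\gamma\cdot 0/2}g,\qquad |\cdot|^{\beta}g \ \text{mimics}\ (-\Delta)^{?},
\]
so instead I would apply Theorem \ref{thm:1798-1} with the multiplier $|\cdot|^{\beta}$ in place of one copy of $(-\Delta)^{\beta/2}$, legitimised by Proposition \ref{prop-2}. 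That is: first invoke Theorem \ref{thm:1798-1} to get
\[
\|(-\Delta)^{\beta/2}g\|_{\mathcal{M}^{p_0}_{q_0}}
\lesssim
\|g\|^{1-\theta}_{\mathcal{M}^{p_2}_{q_2}}\cdot(\text{something})^{\theta},
\]
and then trade $(-\Delta)^{\beta/2}g$ for $|\cdot|^{-\beta}\cdot(\text{stuff})$ via the Hardy inequality; but the symmetry of the desired estimate suggests instead interpolating directly between $g$ and a fractional power, then using $\||\cdot|^{-\gamma}\phi\|\lesssim\|(-\Delta)^{\gamma/2}\phi\|$ with $\phi=g$ to convert the smoothing term into the weighted term.

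The correct bookkeeping, which I would carry out carefully, is this: apply Theorem \ref{thm:1798-1} with the roles $(p,q)\mapsto(p_0,q_0)$, $(p_0,q_0)\mapsto(p_2,q_2)$, $(p_1,q_1)\mapsto(p,q)$, exponent $\alpha\rightsquigarrow\beta+\gamma$, and parameter $\theta$ as above, to the function $g$ in the slot whose top derivative one takes — but since the left side of \eqref{est-2} is $(-\Delta)^{\alpha\theta/2}g$ and we want $g$ itself on the left, one instead applies it to $G:=(-\Delta)^{-\beta/2}\big(|\cdot|^{\beta}\cdot\text{nothing}\big)$... To avoid this circularity, the honest approach is: use Theorem \ref{thm:1798-1} with $f\rightsquigarrow g$, $\alpha\rightsquigarrow\gamma$, and parameter $0$ in the appropriate slot is degenerate, so instead set $\alpha\rightsquigarrow\beta+\gamma$ and use $(-\Delta)^{(\beta+\gamma)\theta/2}g=(-\Delta)^{\beta/2}g$; this gives
\[
\|(-\Delta)^{\beta/2}g\|_{\mathcal{M}^{p_0}_{q_0}}
\lesssim\|g\|^{1-\theta}_{\mathcal{M}^{p_2}_{q_2}}\,
\|(-\Delta)^{(\beta+\gamma)/2}g\|^{\theta}_{\mathcal{M}^{p}_{q}},
\]
which is not yet what we want. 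The actual statement is dual to this: it bounds $\|g\|_{\mathcal{M}^{p_0}_{q_0}}$, so I would instead interpolate $g=(-\Delta)^{-\beta/2}\big((-\Delta)^{\beta/2}g\big)$ by reading \eqref{est-2} with $g$ replaced by $(-\Delta)^{-\beta/2}g$ — legitimate since that function also lies in $C_c^\infty$'s image under smoothing, covered by the lemmas — and with $\alpha=\beta$, giving $\|g\|_{\mathcal{M}^{p_0}_{q_0}}=\|(-\Delta)^{\beta\cdot1/2}(-\Delta)^{-\beta/2}g\|$. The term $(-\Delta)^{-\beta/2}g$ is controlled in the lower-smoothness slot by $\||\cdot|^{\beta}g\|$ through a dual form of Proposition \ref{prop-2}, while the higher-smoothness slot produces $\|(-\Delta)^{\gamma/2}g\|_{\mathcal{M}^p_q}$ once the exponents are matched as above. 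Raising the interpolation output to the powers $\theta=\frac{\beta}{\beta+\gamma}$ and $1-\theta=\frac{\gamma}{\beta+\gamma}$ then yields precisely the claimed inequality. The main obstacle I anticipate is exactly this exponent/slot bookkeeping — verifying that the hypotheses on $(p_0,q_0)$, $(p_2,q_2)$, $(p,q)$ translate into a legitimate instance of \eqref{eq:1795-1} with all indices in the admissible ranges $1<\cdot\le\cdot<\infty$ (the constraint $0<\gamma<n/q$ is what makes Proposition \ref{prop-2} applicable), and checking that the function to which Theorem \ref{thm:1798-1} is applied genuinely satisfies the $C_c^\infty$ (or limiting) hypotheses; the analytic content is entirely contained in the two results already proved.
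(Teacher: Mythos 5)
There is a genuine gap: your proposal never closes into an actual argument. The one concrete application of Theorem \ref{thm:1798-1} you write down produces a bound on $\|(-\Delta)^{\beta/2}g\|_{\mathcal{M}^{p_0}_{q_0}}$ rather than on $\|g\|_{\mathcal{M}^{p_0}_{q_0}}$, which you yourself concede ``is not yet what we want''; the subsequent repairs (applying the interpolation to $(-\Delta)^{-\gamma/2}g$ or $(-\Delta)^{-\beta/2}g$, which are not compactly supported, and invoking a ``dual form of Proposition \ref{prop-2}'' of the shape $\|(-\Delta)^{-\beta/2}g\|\lesssim\||\cdot|^{\beta}g\|$) rest on statements that are nowhere established in the paper and are not consequences of the results you cite. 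Proposition \ref{prop-2} as proved goes only in the direction $\||\cdot|^{-\alpha}g\|_{\mathcal{M}^p_q}\lesssim\|(-\Delta)^{\alpha/2}g\|_{\mathcal{M}^p_q}$, and no amount of slot-relabelling in Theorem \ref{thm:1798-1} turns the quantity $\||\cdot|^{\beta}g\|_{\mathcal{M}^{p_2}_{q_2}}$ into one of the two endpoint norms appearing in \eqref{est-2}. So the final sentence ``Raising the interpolation output to the powers $\theta$ and $1-\theta$ then yields precisely the claimed inequality'' is asserted, not derived.

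The missing idea is much more elementary and does not involve Theorem \ref{thm:1798-1} at all. Write the pointwise factorization
\[
|g(x)|=\bigl[|x|^{\beta}|g(x)|\bigr]^{\gamma/(\beta+\gamma)}
\bigl[|x|^{-\gamma}|g(x)|\bigr]^{\beta/(\beta+\gamma)},
\]
which is an identity since the powers of $|x|$ cancel and the exponents sum to $1$. The hypotheses $\frac{\beta+\gamma}{p_0}=\frac{\beta}{p}+\frac{\gamma}{p_2}$ and $\frac{\beta+\gamma}{q_0}=\frac{\beta}{q}+\frac{\gamma}{q_2}$ are exactly the H\"older conditions for the pair of weights $\gamma/(\beta+\gamma)$ and $\beta/(\beta+\gamma)$, so H\"older's inequality in Morrey spaces gives
$\|g\|_{\mathcal{M}^{p_0}_{q_0}}\le\||\cdot|^{\beta}g\|^{\gamma/(\beta+\gamma)}_{\mathcal{M}^{p_2}_{q_2}}\||\cdot|^{-\gamma}g\|^{\beta/(\beta+\gamma)}_{\mathcal{M}^{p}_{q}}$,
and then Proposition \ref{prop-2} (applicable precisely because $0<\gamma<n/q$) bounds the second factor by $\|(-\Delta)^{\gamma/2}g\|^{\beta/(\beta+\gamma)}_{\mathcal{M}^{p}_{q}}$. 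The interpolation inequality of Theorem \ref{thm:1798-1} is only needed later, in Theorem \ref{thm-4}, to remove the restriction $\gamma<n/q$; you appear to have conflated the two results.
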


\begin{proof}
Write $g(x)=\bigl[|x|^{\beta}g(x)\bigr]^{\gamma/(\beta+\gamma)}
\bigl[|x|^{-\gamma}g(x)\bigr]^{\beta/(\beta+\gamma)}$.
By H\"older's inequality and Proposition \ref{prop-2}, we obtain
\[
\|g\|_{\mathcal{M}^{p_0}_{q_0}}
\le
\||\cdot|^{\beta}g\|^{\gamma/(\beta+\gamma)}_{\mathcal{M}^{p_2}_{q_2}}
\||\cdot|^{-\gamma}g\|^{\beta/(\beta+\gamma)}_{\mathcal{M}^{p}_{q}}\\
\lesssim
\||\cdot|^{\beta}g\|^{\gamma/(\beta+\gamma)}_{\mathcal{M}^{p_2}_{q_2}}
\|(-\Delta)^{\gamma/2}g\|^{\beta/(\beta+\gamma)}_{\mathcal{M}^{p}_{q}},
\]
as desired.
\end{proof}

Finally, we use our estimate for the fractional power of the Laplacian
in Theorem \ref{thm:1798-1} to get the following Heisenberg's
uncertainty inequality (which is analogous to \cite[Theorem 5.4]{Ciatti15}).

\begin{theorem}\label{thm-4}
Let $1<p_1\le q_1<\infty$, $1\le p_2\le q_2<\infty$, and $\beta, \delta>0$.
If $\frac{\beta+\delta}{p_0}=\frac{\beta}{p_1}+\frac{\delta}{p_2}$ and
$\frac{\beta+\delta}{q_0}=\frac{\beta}{q_1}+\frac{\delta}{q_2}$, then
\[
\|g\|_{\mathcal{M}^{p_0}_{q_0}}
\lesssim
\||\cdot|^{\beta}g\|^{\delta/(\beta+\delta)}_{\mathcal{M}^{p_2}_{q_2}}
\|(-\Delta)^{\delta/2}g\|^{\beta/(\beta+\delta)}_{\mathcal{M}^{p_1}_{q_1}}
\]
for every $g\in C_{\text c}^\infty(\mathbb{R}^n)$.
\end{theorem}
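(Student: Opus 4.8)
The plan is to derive Theorem~\ref{thm-4} by combining the Hardy-type inequality of Theorem~\ref{thm-3} with the interpolation inequality of Theorem~\ref{thm:1798-1}. Theorem~\ref{thm-3} already yields a Heisenberg-type bound, but only for a fractional power $\gamma$ constrained by $0<\gamma<n/q$; to reach an arbitrary power $\delta>0$ one uses Theorem~\ref{thm:1798-1} to trade the large power $\delta$ for a small power $\gamma$, at the cost of an extra factor $\|g\|_{\mathcal{M}^{p_0}_{q_0}}$, which is then absorbed since $\|g\|_{\mathcal{M}^{p_0}_{q_0}}<\infty$ for $g\in C_{\text c}^\infty(\mathbb{R}^n)$.

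First I would record two elementary facts drawn from the hypotheses. Solving $\frac{\beta+\delta}{p_0}=\frac{\beta}{p_1}+\frac{\delta}{p_2}$ for $1/p_0$ and using $p_1>1$, $p_2\ge1$ gives $\frac{1}{p_0}<1$, hence $p_0>1$; likewise, the two index relations together with $p_1\le q_1$ and $p_2\le q_2$ give $\frac{1}{p_0}\ge\frac{1}{q_0}$, hence $p_0\le q_0$. Now fix $\gamma>0$ and define $p,q$ by
\[
\frac1p:=\frac1{p_0}+\frac{\gamma}{\beta+\delta}\left(\frac1{p_1}-\frac1{p_2}\right),\qquad
\frac1q:=\frac1{q_0}+\frac{\gamma}{\beta+\delta}\left(\frac1{q_1}-\frac1{q_2}\right).
\]
As $\gamma\to0^+$ we have $(p,q)\to(p_0,q_0)$ with $1<p_0\le q_0<\infty$, so for all sufficiently small $\gamma$ we get $1<p\le q<\infty$, $0<\gamma<n/q$, and $\gamma<\delta$; fix such a $\gamma$. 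A direct computation — expressing $1/p_0$ and $1/q_0$ through the hypotheses of Theorem~\ref{thm-4} — shows that $p,q$ satisfy both
\[
\frac{\beta+\gamma}{p_0}=\frac{\beta}{p}+\frac{\gamma}{p_2},\qquad
\frac{\beta+\gamma}{q_0}=\frac{\beta}{q}+\frac{\gamma}{q_2}
\]
(so Theorem~\ref{thm-3} applies with exponent $\gamma$ and weight indices $p_2,q_2$) and
\[
\frac1p=\frac{1-\gamma/\delta}{p_0}+\frac{\gamma/\delta}{p_1},\qquad
\frac1q=\frac{1-\gamma/\delta}{q_0}+\frac{\gamma/\delta}{q_1}
\]
(so Theorem~\ref{thm:1798-1} applies with $\alpha=\delta$ and $\theta=\gamma/\delta$, the spaces $\mathcal{M}^{p_0}_{q_0}$ and $\mathcal{M}^{p_1}_{q_1}$ playing their designated roles there).

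Then I would apply Theorem~\ref{thm-3} to obtain
\[
\|g\|_{\mathcal{M}^{p_0}_{q_0}}
\lesssim
\||\cdot|^{\beta}g\|_{\mathcal{M}^{p_2}_{q_2}}^{\gamma/(\beta+\gamma)}
\,\|(-\Delta)^{\gamma/2}g\|_{\mathcal{M}^{p}_{q}}^{\beta/(\beta+\gamma)},
\]
and Theorem~\ref{thm:1798-1} (with $\alpha=\delta$, $\theta=\gamma/\delta$) to obtain
\[
\|(-\Delta)^{\gamma/2}g\|_{\mathcal{M}^{p}_{q}}
\lesssim
\|g\|_{\mathcal{M}^{p_0}_{q_0}}^{1-\gamma/\delta}
\,\|(-\Delta)^{\delta/2}g\|_{\mathcal{M}^{p_1}_{q_1}}^{\gamma/\delta}.
\]
Substituting the second estimate into the first and, for $g\not\equiv 0$ (the case $g\equiv0$ being trivial) dividing by the finite positive quantity $\|g\|_{\mathcal{M}^{p_0}_{q_0}}^{(1-\gamma/\delta)\beta/(\beta+\gamma)}$, one is left with an inequality of the form
\[
\|g\|_{\mathcal{M}^{p_0}_{q_0}}^{\gamma(\beta+\delta)/(\delta(\beta+\gamma))}
\lesssim
\||\cdot|^{\beta}g\|_{\mathcal{M}^{p_2}_{q_2}}^{\gamma/(\beta+\gamma)}
\,\|(-\Delta)^{\delta/2}g\|_{\mathcal{M}^{p_1}_{q_1}}^{\gamma\beta/(\delta(\beta+\gamma))};
\]
raising both sides to the power $\delta(\beta+\gamma)/(\gamma(\beta+\delta))$ makes the left-hand side $\|g\|_{\mathcal{M}^{p_0}_{q_0}}$ and turns the two exponents on the right into $\delta/(\beta+\delta)$ and $\beta/(\beta+\delta)$, which is precisely the assertion.

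The main obstacle is the index bookkeeping: verifying that a single parameter $\gamma$ can be chosen so that the constraints demanded by Theorem~\ref{thm-3} and those demanded by Theorem~\ref{thm:1798-1} are simultaneously met — equivalently, that the two displayed systems for $1/p$ and $1/q$ coincide once $p_0,q_0$ are eliminated using the hypotheses of Theorem~\ref{thm-4} — and then checking that the composite exponents collapse exactly to $\delta/(\beta+\delta)$ and $\beta/(\beta+\delta)$. Both steps are routine but must be carried out carefully. (When $\delta<n/q_1$ one may simply take $\gamma=\delta$ and invoke Theorem~\ref{thm-3} directly; the argument above, however, covers every $\delta>0$ at once.)
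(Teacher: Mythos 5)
Your proof is correct and follows essentially the same route as the paper's: apply Theorem~\ref{thm-3} with a small power $\gamma=\delta\theta$ and use the interpolation inequality of Theorem~\ref{thm:1798-1} to replace $(-\Delta)^{\gamma/2}$ by $(-\Delta)^{\delta/2}$, then absorb the resulting factor of $\|g\|_{\mathcal{M}^{p_0}_{q_0}}$. Your write-up is merely more explicit about the intermediate indices $p,q$ and the compatibility of the two sets of index constraints, details the paper leaves to the reader.
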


\begin{proof}
The idea of the proof is the same as in \cite{Ciatti15}. If $\delta
<\frac{n}{q_1}$, we do not have to do anything -- the inequality is
the same as in Theorem \ref{thm-3}. Otherwise, we set
$\gamma=\delta\theta$ and apply the interpolation inequality
\[
\|(-\Delta)^{\delta \theta/2}g\|_{{\mathcal M}^p_q} \lesssim
\|g\|_{{\mathcal M}^{p_0}_{q_0}}^{1-\theta}
\|(-\Delta)^{\delta/2} g\|_{{\mathcal M}^{p_1}_{q_1}}^\theta
\]
for $0<\theta<\frac{n}{\delta q_1}$, so that the inequality in
Theorem \ref{thm-3} becomes
\[
\|g\|_{\mathcal{M}^{p_0}_{q_0}}
\lesssim
\||\cdot|^{\beta}g\|^{\gamma/(\beta+\gamma)}_{\mathcal{M}^{p_2}_{q_2}}
\|(-\Delta)^{\delta/2}g\|^{\beta\theta/(\beta+\gamma)}_{\mathcal{M}^{p_1}_{q_1}}
\|g\|_{\mathcal{M}^{p_0}_{q_0}}^{\beta(1-\theta)/(\beta+\gamma)}.
\]
Rearranging the expression, we get the desired inequality.
\end{proof}

\begin{remark}
Note that the value of $\delta$ in the above proposition can be
as large as possible. This is the benefit from the interpolation
inequality for the fractional power of the Laplacian.
\end{remark}

\bigskip

\noindent{\bf Acknowledgement}.
The authors would like to thank the referee for her/his useful comments.

\medskip


\begin{thebibliography}{999}

\bibitem{Ciatti15} P.~Ciatti, M.G.~Cowling, and F.~Ricci,
``Hardy and uncertainty inequalities on stratified Lie groups'',
\emph{Adv. Math.} \textbf{277} (2015), 365--387.

\bibitem{Gun02} H.~Gunawan, ``Some weighted estimates for the
imaginary powers of Laplace operators'', \emph{Bull. Austral.
Math. Soc.} \textbf{65} (2002), 129--135.

\bibitem{GE09} H.~Gunawan and Eridani, ``Fractional integrals
and generalized Olsen inequalities'', \emph{Kyungpook Math. J.}
\textbf{49} (2009), 31--39.

\bibitem{LYY14} Y. Lu, D. Yang, and W. Yuan,
``Interpolation of Morrey Spaces on Metric Measure Spaces'',
\emph{Canad. Math. Bull.} \textbf{57} (2014), 598--608.

\bibitem{MRZ16} A.~Meskhi, H.~Rafeiro, and M.~A.~Zaighum,
``Interpolation on variable Morrey spaces defined on quasi-metric
measure spaces'', \emph{J. Funct. Anal.} \textbf{270} (2016),
no.~10, 3946--3961.

\bibitem{Nakai94} E.~Nakai, ``Hardy-Littlewood maximal operator,
singular integral operators, and the Riesz potentials on
generalized Morrey spaces'', \emph{Math. Nachr.} \textbf{166}
(1994), 95--103.

\bibitem{Nakai17} E.~Nakai, ``Pointwise multipliers on several
function spaces -- a survey'', \emph{Lin. Nonlin. Anal.} \textbf{3}
(2017), no.~1, 27--59.

\bibitem{Olsen95} P.A.~Olsen, ``Fractional integration, Morrey
spaces and a Schr\"odinger equation'', \emph{Comm. Partial
Differential Equations} \textbf{20} (1995), 2005--2055.

\bibitem{RT14} R.~Rosenthal and H.~Triebel, ``Calder\'on-Zygmund
operators in Morrey spaces'', \emph{Rev. Mat. Complut.} \textbf{27}
(2014), 1--11.

\bibitem{SST11} Y.~Sawano, S.~Sugano and H.~Tanaka, ``Generalized
fractional integral operators and fractional maximal operators
in the framework of Morrey spaces'', \emph{Trans. Amer. Math. Soc.}
\textbf{363} (2011), no.~12, 6481--6503.

\bibitem{ST15} Y.~Sawano and H.~Tanaka, ``The Fatou property of
block spaces'', \emph{J. Math. Sci. Univ. Tokyo} \textbf{22} (2015),
663--683.

\bibitem{SW01} A.~Sikora and J.~Wright, ``Imaginary powers of
Laplace operators'', \emph{Proc. Amer. Math. Soc.} \textbf{129}
(2001), 1745--1754.

\bibitem{Stein70} E.M.~Stein, \emph{Singular Integrals and
Differentiability Properties of Functions}, Princeton University
Press, Princeton, New Jersey, 1970.

\bibitem{YSY10} W.~Yuan, W.~Sickel, and D.~ Yang,
{\it Morrey and Campanato Meet Besov, Lizorkin and Triebel},
Lecture Notes in Mathematics 2005. Springer-Verlag, Berlin, 2010.

\bibitem{YSY15} W.~Yuan, W.~Sickel, and D.~Yang,
``Interpolation of Morrey-Campanato and related smoothness spaces",
\emph{Sci. China Math.} {\bf 58} (2015), 1835--1908.

\bibitem{Zo86} C.T.~Zorko, ``Morrey space'', \emph{Proc. Amer.
Math. Soc.}  \textbf{98} (1986), 586--592.
\end{thebibliography}
\end{document}